
\documentclass[12pt]{article}

\usepackage{hyperref}

\usepackage{amsfonts, amsmath, amssymb, amsthm}

\usepackage{a4}

\usepackage{blkarray}

 \usepackage{xcolor}

 \definecolor{Refkey}{RGB}{255,127,0}
 \definecolor{Labelkey}{RGB}{127,0,255}
 \makeatletter 
  \def\SK@refcolor{\color{Refkey}}
  \def\SK@labelcolor{\color{Labelkey}}
 \makeatother

  \definecolor{mdg}{RGB}{0,177,0} 
  \definecolor{mdb}{RGB}{0,0,191}
  \definecolor{mddb}{RGB}{0,0,91}
  \definecolor{mdy}{RGB}{255,69,0} 
  \definecolor{gray}{RGB}{99,99,99} 

\usepackage{upgreek}

\DeclareMathOperator{\const}{const}
\DeclareMathOperator{\rank}{rank}

\newtheorem{theorem}{Theorem}
\newtheorem{proposition}{Proposition}

\theoremstyle{definition}
\newtheorem{definition}{Definition}
\newtheorem{convention}{Convention}

\theoremstyle{remark}
\newtheorem*{remark}{Remark}

\title{Heptagon relations from a simplicial 3-cocycle, and their cohomology}
\author{Igor G. Korepanov}

\date{July 2022 -- May 2025}

\begin{document}

\sloppy

\maketitle

\medskip

\begin{abstract}
We introduce new algebraic structures associated with heptagon relations---higher analogue of the well-known pentagon. The main points we deal with are: (i)~polygon relations as algebraic imitations of Pachner moves, on the example of heptagon, (ii)~parameterization of heptagon relations by simplicial 3-cocycles, (iii)~applications to invariants of pairs "piecewise linear 5-manifold, a 3rd cohomology class on it".
\end{abstract}

\section{Introduction}\label{s:i}

This paper is about new algebraic structures appearing in the study of \emph{polygon relations}.

\subsection{Pentagon---the simplest polygon relation}\label{ss:p}

 Polygon relations appear as a generalization of the popular \emph{pentagon} relations (also called ``solutions to pentagon equation''). Pentagon relations are often written as
\begin{equation}\label{pentagon-constant}
 R_{12} R_{13} R_{23} = R_{23} R_{12}.
 \end{equation}
The exact meaning of~\eqref{pentagon-constant} may, however, be quite different.

The most popular version of~\eqref{pentagon-constant} seems to be \emph{constant set-theoretic}. In it, there are three copies $X_1$, $X_2$, $X_3$ of a set~$X$, and $R_{ij}$ maps the Cartesian product $X_i \times X_j$ into itself, all~$R_{ij}$ also being copies of each other. The whole relation~\eqref{pentagon-constant} takes place in $X_1 \times X_2 \times X_3$, see for instance~\cite[Section~1]{pcst}.

There exists also a \emph{nonconstant} version of~\eqref{pentagon-constant}:
\begin{equation}\label{pentagon-nonconstant}
 R_{12}^{(1)} R_{13}^{(2)} R_{23}^{(3)} = R_{23}^{(4)} R_{12}^{(5)},
\end{equation}
where all $R$'s can be different and, moreover, the sets~$X_i$ in whose direct products they act may also be all different. An interesting particular case of such relations is where sets~$X_i$ are \emph{linear spaces} over a field, and mappings~$R_{ij}^{(a)}$ are linear, hence, can be represented by matrices in the finite-dimen\-sional case. Nontrivial relations of this kind can be found in~\cite{KS}, and we will mention them below in Section~\ref{s:po} while discussing the pentagon analogue of a construction proposed in this paper.

One more realization of \eqref{pentagon-constant} or~\eqref{pentagon-nonconstant} that must be mentioned is \emph{quantum relations}. In that case, each $X_i$ is a linear space, while each $R_{ij}^{(a)}$ is a linear operator acting in the \emph{tensor product} $X_i \otimes X_j$; the whole relation takes place in $X_1 \otimes X_2 \otimes X_3$. The interest in quantum relations originates from the Yang--Baxter equation that appeared originally in its quantum form; Yang--Baxter is the first in the hierarchy of \emph{simplex equations/relations}, and these are often cited together with polygon relations, due to some parallelism and, mainly, nontrivial interrelations between the two hierarchies, see for instance~\cite{DM-H} and~\cite{YB-from-pentagon}.

\subsection{Polygon relations as algebraic imitations of Pachner moves}\label{ss:im}

Pentagon relations are known to correspond, geometrically, to \emph{Pachner move 2--3}. This means the replacement of two adjacent tetrahedra (forming the \emph{star} of their common 2-face) in a triangulation of a 3-manifold with three tetrahedra forming the star of an edge introduced instead of the mentioned 2-face.

Concerning polygon relations---higher analogues of pentagon---we mean by that name algebraic equalities that can be said to imitate higher-dimen\-sional Pachner moves. More specifically, $(d+2)$-gon relations are associated with $d$-dimen\-sional moves. The most promising kinds of these relations can, however, no longer (for $d>3$) be written as the equalness of two compositions of mappings, like \eqref{pentagon-constant} or~\eqref{pentagon-nonconstant}. Instead, we introduce a more sophisticated kind of composition of what we call \emph{permitted colorings} of $d$-simplices, on the example of heptagon ($d=5$).

Earlier, similar constructions have been shown to work for \emph{hexagon} ($d=4$)~\cite{nonconstant-hexagon}, and there is no problem to interpret this way any set-theoretic pentagon relation as well.

Note that there exists also a quite different definition of polygon relations in the literature, leading (if $d>3$) to different objects~\cite{DM-H}.

\subsection{Parameterization by simplicial cocycles}\label{ss:pc}

One appealing feature of our constructions is that our polygon relations are naturally parameterized by \emph{simplicial cocycles}---3-cocycles in the heptagon case, 2-cocycles for hexagon~\cite{nonconstant-hexagon}, and actually 1-cocycles for pentagon, see~\cite{KS} and Section~\ref{s:po} below.

\subsection{Topological applications}\label{ss:ta}

This paper is about some new \emph{algebra}; no achievements are claimed, at this moment, in the topology of five-dimen\-sional manifolds. Still, we find it reasonable to point at possible applications to \emph{invariants of pairs} "piecewise linear 5-manifold, a 3rd cohomology class on it". We provide a few relatively simple calculation examples, for some not very complicated 5-manifolds, that give a \emph{hope} for really interesting topological applications in the future.

\subsection{Contents of the rest of the paper}\label{ss:r}

Below,
\begin{itemize}\itemsep 0pt
 \item in Section~\ref{s:f}, we give general definitions concerning Pachner moves and polygon relations formulated in terms of permitted colorings of simplices, in the form convenient for our purposes,
 \item in Section~\ref{s:ac}, we introduce specific permitted colorings built out of a simplicial 3-cocycle, and show that they lead indeed to heptagon relations,
 \item in Section~\ref{s:po}, we investigate, in order to better understand the nature of the obtained heptagon relations, their simpler analogue---pentagon relations based on the same ideas. We show that these are nontrivial and already well-known,
 \item in Section~\ref{s:co}, we introduce heptagon cochains and the coboundary operator. Incorporating also the components of the mentioned 3-cocycle as parameters, we arrive at the definition of ``heptagon parametric cochain complex'',
 \item in Section~\ref{s:gi}, we explain some general ideas of how heptagon relations and heptagon 5-cocycles can lead to manifold invariants,
 \item in Section~\ref{s:b4}, we introduce a remarkable heptagon 4-cocycle, depending bilinearly on \emph{two} permitted colorings of a 4-simplex, and
 \item in Section~\ref{s:b5}, we describe a procedure that produces what we call bipolynomial 5-cocycles from this 4-cocycle,
 \item in Section~\ref{s:sb}, we introduce an invariant of a pair ``piecewise linear 5-manifold, cohomology class of a 3-cocycle on it'', in the form that we call `stable bipolynomial',
 \item in Section~\ref{s:c}, we present some calculation results for specific manifolds, using fields of characteristic two. In all examples, our bipolynomial invariant turns out to have a remarkable form, namely of a symmetric \emph{bilinear} form of the \emph{squares} of its variables.
\end{itemize}

\section{Pachner moves and polygon relations}\label{s:f}

We give here general definitions belonging to Pachner moves and polygon relations in any manifold dimension~$d$.

\subsection{Pachner moves}\label{ss:P}

Any triangulation of a closed piecewise linear (PL) manifold~$M$ can be transformed into any other triangulation by a finite sequence of \emph{Pachner moves}~\cite{Pachner,Lickorish}.

In $d$ dimensions, there are $d+1$ kinds of Pachner moves, denoted often as ``$m$--$n$'', where $m=1,\ldots,d+1$, and $n=d+2-m$. Notation $m$--$n$ means that we take a part of a triangulation consisting of $m$ simplices of the highest dimension~$d$ and forming a \emph{star} of a $(d-m+1)$-simplex, and replace it with a star of a $(d-n+1)$-simplex, consisting of $n$ simplices. Moreover, if glued together along their boundaries (that are isomorphic as simplicial complexes), these two stars must form together the boundary~$\partial \Delta^{d+1}$ of a simplex of the next dimension~$d+1$.

To be exact, the gluing goes as follows. If a $(d-1)$-face~$\Delta_1^{d-1}$ of the first star was glued to the same $(d-1)$-face in the rest of~$M$ to which we glue a~$\Delta_2^{d-1}$ of the second star, then we glue $\Delta_1^{d-1}$ to~$\Delta_2^{d-1}$. We can also take the liberty to identify each $\Delta_1^{d-1}$ with its corresponding~$\Delta_2^{d-1}$ and say that the two stars have the \emph{same boundary}.

We call the replaced star of $m$ simplices the \emph{initial cluster}, or \emph{left-hand side} (lhs) of the Pachner move, and denote it~$C_{\mathrm{ini}}$. The replacing star of $n$ simplices will be called \emph{final cluster}, or \emph{right-hand side} (rhs) of the move, and denoted~$C_{\mathrm{fin}}$.

\subsection{Permitted colorings}\label{ss:pl}

A $d$-simplex has $d+1$ faces--- $(d-1)$-cells~$\Delta^{d-1}$. Suppose we have a set~$X$, called set of \emph{colors}, with which we can color these faces. All possible colorings of a $d$-simplex---we denote it $\Delta^d$ or often simply~$v$ ---belong thus to~$X^{\times (d+1)}$ ---the $(d+1)$-th Cartesian degree of~$X$. We postulate, however, that not all colorings are \emph{permitted}, but there is a given subset $R_v \subset X^{\times (d+1)}$ of permitted colorings. If we have a simplicial complex~$K$, then for each of its $d$-simplices~$v$, its own~$R_v$ must be given.
   
We can thus call our permitted colorings `nonconstant', emphasizing the fact that the sets~$R_v$ may be different for different $d$-simplices~$v$. 

Accordingly, the corresponding polygon relations can be called `nonconstant polygon'.

\begin{definition}\label{d:pK}
We say that a \emph{permitted coloring} of a simplicial complex~$K$ is given if an element of a given color set~$X$ is assigned to each of its $(d-1)$-faces, and in such way that the restriction of this coloring onto any $d$-simplex~$v$ in~$K$ is permitted---belongs to the given subset~$R_v$.
\end{definition}

When we glue $d$-simplices together to obtain a triangulated $d$-manifold~$K=M$ with boundary~$\partial M$, the condition that $M$ must be colored permittedly results in the fact that not all colorings may appear on~$\partial M$, but only some subset $R_{\partial M} \subset X^{\times N_{d-1}}$, where $N_{d-1}$ is the number of $(d-1)$-cells in~$\partial M$.

\begin{definition}\label{d:pb}
    The restrictions of permitted colorings of~$M$ onto~$\partial M$ are called \emph{permitted colorings of~$\partial M$}.
\end{definition}

\subsection{Polygon relations and full polygon}\label{ss:fp}

\begin{definition}\label{d:hr}
We say that \emph{polygon relation} corresponding to a given Pachner move holds if the two sets~$R_{\partial M}$ of permitted boundary colorings coincide for $M$ being either the lhs or rhs of that move.
\end{definition}

As we already indicated, $\partial M$ is the same for both sides.

\medskip

Suppose first that there are given sets $X_0$, \ldots, $X_{d+1}$ of permitted colorings for all $d$-simplices $\Delta _0^d$, \ldots, $\Delta _{d+1}^d$ in the boundary of~$\Delta^{d+1}$. We can interpret $m$ of these simplices as the initial cluster~$C_{\mathrm{ini}}$ of a Pachner move, and the remaining $n$ of them as the final cluster~$C_{\mathrm{fin}}$. Here of course $m+n=d+2$, \ $m,n\ge 1$.

\begin{definition}\label{d:fc}
If the set of permitted boundary colorings of~$C_{\mathrm{ini}}$ coincides with that of~$C_{\mathrm{fin}}$ for any such $C_{\mathrm{ini}}$ and $C_{\mathrm{fin}}$, we say that $X_0$, \ldots, $X_{d+1}$ are \emph{fully compatible}.
\end{definition}

Let now there be a \emph{fixed} set~$\mathcal X$ whose elements are $(d+2)$-tuples $\{ X_0$, \ldots, $X_{d+1} \}$ of sets of permitted colorings for $\Delta _0^d$, \ldots, $\Delta _{d+1}^d$. 

\begin{definition}\label{d:fh}
If any such tuple is fully compatible, we say that $\mathcal X$ satisfies the \emph{full polygon}.
\end{definition}

\smallskip

We think of these tuples as depending on some \emph{parameters}: to each set of parameters corresponds a tuple $ \in \mathcal X$. Below, we will introduce a simplicial 3-cocycle $\omega$ whose components will serve as such parameters.

\subsection{Pachner moves in the heptagon case}\label{ss:7p}

Heptagon case appears when the dimension $d=5$. Hence, there are six types of Pachner moves: 1--6, 2--5, 3--4 and their inverses.

\section{Our actual permitted colorings and heptagon relations}\label{s:ac}

\subsection{Coloring with simplicial 3-cocycles modulo fixed cocycle~$\omega$}\label{ss:cm}
 
 Let $\omega$ be a simplicial 3-cocycle taking values in a field~$F$ and given on a simplex~$\Delta^m$. As long as we stay within just one simplex, $\omega$ is also a \emph{coboundary}:
\begin{equation}\label{odb}
\omega = \delta \beta ,
\end{equation}
or, in components,
\begin{equation}\label{odb'}
\omega _{ijkl} = \beta _{jkl}-\beta _{ikl}+\beta _{ijl}-\beta _{ijk}.
\end{equation}

\begin{convention}\label{c:i}
Here and below, notation of the form $i_0 i_1 \ldots i_n$ means an $n$-simplex with vertices denoted by natural numbers $i_0 < i_1 < \ldots < i_n$.
\end{convention}

We impose the following technical condition on~$\omega$:
\begin{equation}\label{onv}
\omega _{ijkl} \ne 0 \quad \text{for all tetrahedra }\, ijkl.
\end{equation}

  \paragraph{Coloring a 4-simplex}
  The colors of one 4-face~$\Delta^4 \subset \Delta^m$ belong, by definition, to the \emph{three}-dimen\-sional $F$-linear space called~$V_{\Delta^4}$ consisting of 3-cocycles~$\nu$ on~$\Delta^4$ taken \emph{to within adding a multiple of~$\omega$} (restricted to~$\Delta^4$). If we choose a basis in each~$V_{\Delta^4}$, we can think of these colors as belonging to~$F^3$.
  
  \paragraph{Coloring a 5-simplex}
  For a 5-simplex~$\Delta^5$, we define permitted colorings, again, as colorings given by 3-cocycles~$\nu$ considered up to adding a multiple of~$\omega$.
  
  \paragraph{Coloring higher simplices or even general simplicial complexes}
   This goes according to our already given general Definition~\ref{d:pK}. Note that this means that each 5-(sub)simplex has, generally, a~$\nu$ \emph{of its own}.

  \subsection{Full heptagon for our colorings}

\begin{proposition}\label{p:f}
The permitted colorings of the boundary of either lhs or rhs of a Pachner move correspond bijectively to the simplicial 3-cocycles\/~$\nu$ given on this boundary, taken to within adding multiples of~$\omega$.
\end{proposition}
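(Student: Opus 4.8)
The plan is to route everything through a whole cluster $C$ --- standing for either the lhs $C_{\mathrm{ini}}$ or the rhs $C_{\mathrm{fin}}$ --- and to prove two things: (a) the permitted colorings of $C$ are exactly the simplicial $3$-cocycles on $C$ taken modulo $\omega$, and (b) restriction to $\partial C$ carries these bijectively onto the $3$-cocycles on $\partial C$ modulo $\omega$. Combining (a) and (b) with Definition~\ref{d:pb} gives the assertion; and since the target in (b) makes no reference to $C$, the heptagon relation (Definition~\ref{d:hr}) drops out as an immediate corollary.

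For (a): by Definition~\ref{d:pK} a permitted coloring of $C$ assigns to each $4$-face $f$ an element of $V_f=Z^3(f)/\langle\omega|_f\rangle$ in such a way that on every $5$-simplex $v\subset C$ the six colors of its faces lift simultaneously to one cocycle $\nu_v\in Z^3(v)$ (modulo $\omega$). One direction is clear: a global $\nu\in Z^3(C)$ induces such a coloring. For the converse I would glue, adjoining the $5$-simplices of $C$ one at a time. When a new $v$ is attached along the union $P$ of some of its facets, the partially built cocycle and the chosen lift $\nu_v$ both induce the prescribed colors on those facets, hence on each facet $f\subseteq P$ they differ by a scalar multiple $c_f\,\omega|_f$. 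The crucial point --- and the only place the hypothesis $\omega_{ijkl}\ne0$ is essentially used --- is that this scalar is rigid: two facets of a $5$-simplex share a tetrahedron $t$, so agreement of the two cocycles on $t$ together with $\omega_t\ne0$ forces $c_f=c_{f'}$, and since $P$ (a union of facets of $v$) is connected through such shared tetrahedra, all the $c_f$ coincide. Subtracting that common multiple of $\omega$ from $\nu_v$ makes it agree with the built cocycle exactly on $P$, so the two pieces glue to a cocycle on the enlarged complex. The same shared-tetrahedron argument applied across all of $C$ --- which is strongly connected, any two of its $5$-simplices meeting along a common facet --- shows that two global cocycles inducing the same coloring differ by a multiple of $\omega$; this yields the bijection $\{\text{permitted colorings of }C\}\cong Z^3(C)/\langle\omega\rangle$.

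For (b): a permitted coloring of $\partial C$ is, by Definition~\ref{d:pb} and part (a), the coloring of the $4$-faces of $\partial C$ induced by $\nu|_{\partial C}$ for some $\nu\in Z^3(C)$; this is a $3$-cocycle on $\partial C$ modulo $\omega$, and --- by the shared-tetrahedron argument once more, now using that $\partial C$ is a combinatorial $4$-sphere, hence a strongly connected pseudomanifold --- two such $\nu$'s produce the same boundary coloring iff their restrictions to $\partial C$ differ by a multiple of $\omega$. So the resulting map to $Z^3(\partial C)/\langle\omega\rangle$ is well defined and injective. Surjectivity amounts to the statement that every $3$-cocycle $\bar\nu$ on $\partial C$ extends to a $3$-cocycle on $C$: lifting $\bar\nu$ to any $3$-cochain $\tilde\nu$ on $C$, the coboundary $\delta\tilde\nu$ vanishes on $\partial C$ and represents a relative class in $H^4(C,\partial C)$ whose vanishing is exactly the obstruction to correcting $\tilde\nu$ into a cocycle; but $C$ is a PL $5$-ball with boundary $\partial C$, so $H^4(C,\partial C;F)=0$. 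This proves the proposition for $C=C_{\mathrm{ini}}$, and for $C=C_{\mathrm{fin}}$ verbatim, interchanging the two groups of vertices.

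The genuinely routine ingredients are the soft topology --- $C$ being a PL ball, $\partial C$ its boundary sphere, the vanishing of $H^4(C,\partial C)$ --- which can be dispatched in a sentence, or via the explicit description of $C$ as a join of the boundary of a simplex with a simplex. The real content, and the step I expect to be the main obstacle to write cleanly, is the gluing in (a): reconciling the independent ``mod $\omega$'' ambiguities on the individual $4$-faces into a single global cocycle. Everything there turns on the small rigidity observation above, which is precisely where the non-degeneracy condition $\omega_{ijkl}\ne0$ earns its keep.
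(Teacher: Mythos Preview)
Your argument is correct and rests on the same key observation as the paper's proof: the nondegeneracy condition $\omega_{ijkl}\ne 0$ forces the ``mod $\omega$'' ambiguities on adjacent $4$-faces to agree, because those faces share a tetrahedron. The paper runs the same inverse-shelling/gluing argument, with the same shared-tetrahedron rigidity, but organizes it slightly differently: it works directly with the map from $3$-cocycles modulo~$\omega$ on $\partial C$ to permitted boundary colorings, proving injectivity trivially and surjectivity by the inductive gluing, and simply asserts that extending a $3$-cocycle from $\partial C$ to $C$ ``is no problem''. Your two-step decomposition through the full cluster~$C$, together with the explicit cohomological justification $H^4(C,\partial C;F)=0$ for that extension, is a mild repackaging rather than a different idea; it makes the logical structure a bit more transparent at the cost of one extra layer.
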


\begin{proof}

We must show that the mapping (3-cocycles~$\nu$ modulo~$\omega$ on the boundary) $\to$ (permitted boundary colorings) is both injective and surjective. Note that there is no problem to extend a 3-cocycle from the boundary of such cluster onto the whole cluster, so the colorings induced by cocycles~$\nu$ are permitted.

\emph{Injectivity} is clear: if $\nu$ modulo~$\omega$ does not vanish everywhere on the boundary, then there is certainly a nonzero color on some~$\Delta^4$.

\emph{Surjectivity} is proved inductively, by assembling our cluster from 5-simplices, gluing one new simplex at each step---doing an \emph{inverse shelling} in the terminology of Pachner~\cite{Pachner}, starting from a single simplex~$\Delta^5$. The boundary is a PL sphere at each step. 

Suppose we have a permitted coloring of a cluster~$C$. The inductive assumption states that the restriction of this coloring onto~$\partial C$ corresponds to some $\nu$ modulo~$\omega$. We are now gluing a new~$\Delta^5$ to~$C$, with the understanding that $\omega$ is already given on this~$\Delta^5$ also.

We choose some specific~$\nu$ (not modulo~$\omega$) on~$\partial C$; there is also some cocycle~$\nu'$ on~$\Delta^5$ that gives its permitted coloring. Consider first the case where $\partial C$ and the new~$\Delta^5$ have only one 4-face~$\Delta^4$ in common. Note that the coloring of this~$\Delta^4$ is just the same as $\nu$ modulo~$\omega$ on this~$\Delta^4$, that is, $\nu$ and~$\nu'$ differ by a multiple of~$\omega$ on~$\Delta^4$, and no problem to add this multiple to~$\nu'$ so that it will become an extension of~$\nu$ onto the whole $C\cup \Delta^5$.

 Let now there be several 4-faces~$\Delta^4$ by which $\Delta^5$ is glued to~$C$. We take then one of these, $\Delta _1^4$, and glue, for a moment, our $\Delta^5$ to~$C$ along only~$\Delta _1^4$, leaving other 4-faces unglued. Then, we extend $\nu$ onto~$\Delta^5$ as above, and call this extension~$\nu _1$.

Similarly, we can take another 4-face, $\Delta _2^4$, extend $\nu$ onto~$\Delta^5$ through it, and call this extension~$\nu _2$.

As $\nu _1$ and~$\nu _2$ give the same coloring of~$\Delta^5$, they must coincide there modulo~$\omega$:
\begin{equation*}
\nu _1 - \nu _2 = \const \cdot \, \omega.
\end{equation*}
But $\Delta _1^4$ and~$\Delta _2^4$ have a common 3-face~$\Delta^3$ (as soon as they belong to the same~$\Delta^5$), belonging also to~$\partial C$, so
\begin{equation*}
\const \cdot \, \omega |_{\Delta^3} = 0,
\end{equation*}
and as $\omega |_{\Delta^3} \ne 0$ according to our condition~\eqref{onv}, we have $\const = 0$, hence $\nu _1=\nu _2$, hence $\nu$ can be extended onto the newly glued simplex without contradictions.
\end{proof}

\begin{proposition}\label{p:fh}
The full heptagon does hold for the colorings of 5-simplices described above.
\end{proposition}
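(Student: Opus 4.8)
The plan is to derive Proposition~\ref{p:fh} directly from Proposition~\ref{p:f}, exploiting that the latter describes the set of permitted boundary colorings of a cluster \emph{intrinsically}, through the boundary complex alone. First I would recall the meaning of Definitions~\ref{d:fc} and~\ref{d:fh} in our situation: a tuple $\{X_0,\ldots,X_6\}\in\mathcal X$ consists of the sets of permitted colorings of the seven facets $\Delta_0^5,\ldots,\Delta_6^5$ of a $6$-simplex $\Delta^6$ carrying a $3$-cocycle $\omega$ that obeys~\eqref{onv}; a Pachner move partitions these facets into an initial cluster $C_{\mathrm{ini}}$ (a star built from $m$ of them) and a final cluster $C_{\mathrm{fin}}$ (a star built from the remaining $n=7-m$), and ``fully compatible'' means that $R_{\partial C_{\mathrm{ini}}}=R_{\partial C_{\mathrm{fin}}}$ for every such partition.

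The first step is to apply Proposition~\ref{p:f} to each of $C_{\mathrm{ini}}$ and $C_{\mathrm{fin}}$ separately: it identifies $R_{\partial C_{\mathrm{ini}}}$ with the set of $3$-cocycles on $\partial C_{\mathrm{ini}}$ taken modulo multiples of~$\omega$, and likewise for $C_{\mathrm{fin}}$. The crucial second step is to notice that in both cases this identification is realized by the \emph{same} prescription: a $3$-cocycle $\nu$ on the boundary is sent to the coloring whose value on a $4$-face $\Delta^4$ of the boundary is the class of $\nu|_{\Delta^4}$ modulo $\omega|_{\Delta^4}$. This prescription mentions only the boundary complex and the restriction of $\omega$ to it; it never refers to the interior of the cluster being filled in. Now $\partial C_{\mathrm{ini}}$ and $\partial C_{\mathrm{fin}}$ are literally the same simplicial complex --- this is precisely the requirement that the two stars glue up to $\partial\Delta^6$ --- and they carry the very same $\omega$, namely the one restricted from $\Delta^6$. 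Hence the two subsets of $X^{\times N_4}$ produced by the prescription coincide, that is, $R_{\partial C_{\mathrm{ini}}}=R_{\partial C_{\mathrm{fin}}}$.

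The last step is to observe that nothing in this reasoning used anything about $\omega$ beyond the standing condition~\eqref{onv}, which holds for every $\omega$ parameterizing a tuple of~$\mathcal X$; so the argument is uniform over~$\mathcal X$, and the full heptagon holds in the sense of Definition~\ref{d:fh}.

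I do not expect a genuine obstacle here, since the substantive work --- the surjectivity half in particular --- is already contained in Proposition~\ref{p:f}. The one point that deserves a sentence of justification is the claim that the bijection of Proposition~\ref{p:f} is cluster-independent, i.e.\ that the boundary coloring represented by a cocycle $\nu$ is the same whether we regard $\nu$ as extending over $C_{\mathrm{ini}}$ or over $C_{\mathrm{fin}}$; but this is immediate from how a $4$-simplex is coloured (its colour is just $\nu$ on it modulo $\omega$), while the surjectivity half of Proposition~\ref{p:f} guarantees that every such boundary cocycle really does extend to a permitted coloring of whichever of the two clusters one has in mind.
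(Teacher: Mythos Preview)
Your proposal is correct and follows the same route as the paper, which simply states that the proposition is immediate from Definitions~\ref{d:hr} and~\ref{d:fh} together with Proposition~\ref{p:f}. You have spelled out in more detail the key observation that the bijection of Proposition~\ref{p:f} is intrinsic to the common boundary, which the paper leaves implicit.
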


\begin{proof}
This follows immediately from Definitions \ref{d:hr} (of the polygon relation corresponding to a Pachner move) and~\ref{d:fh} (of the full polygon), and Proposition~\ref{p:f}.
\end{proof}
 
\subsection{Double colorings}\label{ss:dl}
It turns out that \emph{double} colorings will be of great use for us: 
\begin{equation*}
r = ( r_1, r_2),
\end{equation*}
where $r_1$ and $r_2$ are colorings of the kind introduced above.

As long as we consider just a heptagon relation for such~$r$, it certainly holds because it means just a pair of separate relations for $r_1$ and~$r_2$. The meaning of double colorings will be clear soon when we introduce \emph{heptagon cochains} as functions depending on colorings of the simplices of a given dimension.

\section{A pentagon version of our construction gives known pentagon solutions}\label{s:po}

For a better understanding of heptagon relations introduced in Section~\ref{s:ac}, it may be instructive to take a look at their analogues in the simpler case of pentagon relations. In this case, \emph{triangles}---faces of 3-simplices, that is, tetrahedra---must be colored; this is done using 1-cocycles~$\nu$ modulo a fixed 1-cocycle~$\omega$. The latter can be written locally as $\omega _{ij} = z_i-z_j$.
 
To reveal the nature of the resulting pentagon relations, it turns out quite enough to consider just one tetrahedron~$1234$. Within each triangle~$ijk$, the linear space of 1-cocycles~$\nu$ taken modulo~$\omega$ is one-dimensional. We choose the following basis vector in it:
\begin{equation}\label{tb}
   \begin{pmatrix} 1 & 1 & 0 \end{pmatrix} \mod \omega .
\end{equation}
Here the components correspond to 1-faces $ij$, $ik$, $jk$, in this order; remember that $i<j<k$, according to Convention~\ref{c:i}.

For the tetrahedron~$1234$, the linear space of permitted colorings is two-dimen\-sional, and we choose the following basis in it:
\begin{align*}
 \nu _1 = & \begin{pmatrix} 1 & 1 & 1 & 0 & 0 & 0 \end{pmatrix} \mod \omega, \\
 \nu _2 = & \begin{pmatrix} -1 & 0 & 0 & 1 & 1 & 0 \end{pmatrix} \mod \omega .
\end{align*}
Components correspond to 1-faces $12$, $13$, $14$, $23$, $24$ and~$34$, in this order.

  \paragraph{Components of cocycles in the above bases}
 The components of $\nu _1$ and $\nu _2$ corresponding to triangles $123$, $124$, $134$, $234$ in bases~\eqref{tb} are:
\begin{equation}\label{te}
       \begin{pmatrix} \text{components of} & \nu _1 \\
                       \text{components of} & \nu _2 \end{pmatrix} =
       \begin{pmatrix} 1 & 1 & 1 & 0 \\
                       -\frac{z_1-z_3}{z_2-z_3} & -\frac{z_1-z_4}{z_2-z_4}  & 0 & 1 \end{pmatrix}
\end{equation}

The colors~$x_{ijk}$ of all triangles~$ijk$---faces of~$1234$---belong to the two-dimensional space generated by the rows of~\eqref{te}, hence two of them can be expressed in terms of the others:
\begin{equation}\label{pe}
 \begin{pmatrix} x_{124} \\ x_{234} \end{pmatrix} =
 \begin{pmatrix}\frac{(z_3-z_2)(z_4-z_1)}{(z_3-z_1)(z_4-z_2)} & \frac{(z_2-z_1)(z_4-z_3)}{(z_3-z_1)(z_4-z_2)} \\
                -\frac{z_3-z_2}{z_3-z_1} & \frac{z_3-z_2}{z_3-z_1}  \end{pmatrix}
 \begin{pmatrix} x_{123} \\ x_{134} \end{pmatrix} 
\end{equation}
 
 \paragraph{Normalization}
To link these calculations to those in~\cite{KS}, introduce \emph{normalized colors}
\begin{equation*}
 y_{ijk} \stackrel{\mathrm{def}}{=} r_{ijk} x_{ijk}, \text{ \ where \ } r_{ijk} \stackrel{\mathrm{def}}{=} \sqrt{ \frac{z_k-z_j}{(z_k-z_i)(z_j-z_i)} } .
\end{equation*}

\paragraph{Result: known orthogonal matrix for pentagon}

Then
\begin{equation*}
\begin{pmatrix} y_{124} \\ y_{234} \end{pmatrix} = \begin{pmatrix} a & b \\ c & d \end{pmatrix} \begin{pmatrix} y_{123} \\ y_{134} \end{pmatrix} ,
\end{equation*}
where
\begin{align*}
 a = d =  & \sqrt{ \frac{(z_3-z_2)(z_4-z_1)}{(z_3-z_1)(z_4-z_2)} } \\
 b = -c = & \sqrt{ \frac{(z_2-z_1)(z_4-z_3)}{(z_3-z_1)(z_4-z_2)} }
\end{align*}
One can see that this has the same structure as well-known orthogonal pentagon matrices in~\cite[Eq.~(10)]{KS}.

  \begin{remark}
   A pedantic reader can analyze this way the whole pentagon relation and see that the result is nothing else but the relation with five orthogonal matrices in~\cite[Section~3]{KS}. We think, however, that the above calculations are quite enough to see this.
  \end{remark}

 \section{Heptagon cochains and their coboundaries}\label{s:co}
 
 \subsection{Coboundary operator for polygon cochains}
 
 Consider first the situation where the set of permitted colorings is given for a simplex~$\Delta^{m+1}$. 
 \begin{definition}\label{d:cc}
 \emph{Polygon $(m+1)$-chains} (with coefficients in~$\mathbb Z$) are defined as the free group generated by these colorings. \emph{Polygon $(m+1)$-cochains} with coefficients in an abelian group~$G$ are defined as homomorphisms from the group of chains into~$G$.
 \end{definition}
 
 \begin{definition}\label{d:cb}
  The \emph{coboundary}
\begin{equation}\label{gcob}
\delta \mathfrak c_m \colon\;\;\mathcal C_{m+1} \to G
\end{equation}
of $m$-cochain~$\mathfrak c_m$ is the following mapping. For a permitted coloring~$r$ of a simplex $\Delta^{m+1} = i_0\dots i_m$,
\begin{equation}\label{cb}
(\delta \mathfrak c_m) (r) = \sum_{k=0}^{m+1} (-1)^k\, \mathfrak c_m \bigl( f_k^{(m+1)}(r) \bigr) ,
\end{equation}
where $f_k^{(m+1)}(r)$ means the restriction of coloring~$r$ onto the face $i_0\dots \widehat{i_k} \dots i_m, \quad\; 0\le k\le m$.
 \end{definition}

\subsection{Parameters $\omega$ and heptagon cochain complex}

Our actual permitted colorings for heptagon (co)chains depend on the 3-cocycle $\omega$ on the given simplex.
\begin{definition}
\emph{Parametric $m$-cochain} is a function mapping each simplicial 3-cocycle $\omega$ given on $\Delta^m$ and satisfying \eqref{onv} (that is, $\omega_{ijkl}\ne 0$) into a heptagon $m$-cochain with the set of permitted colorings corresponding to~$\omega$.
\end{definition}

Parametric $m$-cochains form an abelian group with the addition defined as the addition of heptagon $m$-cochains for each fixed~$\omega$.

Taking the \emph{same} parametric $m$-cochain for all faces of $\Delta^{m+1}$, we obtain from \eqref{cb} a homomorphism $C_{\mathrm{par}}^m \to C_{\mathrm{par}}^{m+1}$, where $C_{\mathrm{par}}^m$ is the set of all parametric $m$-cochains. This yields a \emph{heptagon parametric cochain complex}
\begin{equation}\label{pcc-hepta}
0 \to C_{\mathrm{par}}^{4} \stackrel{\delta}{\to} C_{\mathrm{par}}^{5} \stackrel{\delta}{\to} C_{\mathrm{par}}^{6} \stackrel{\delta}{\to} \dots\, .
\end{equation}

\begin{definition}\label{d:pa}
 Parametric cocycles and coboundaries are defined in the usual way as kernels and images of the corresponding arrows in~\eqref{pcc-hepta}.
\end{definition}

In this work we, however, do not deal with $m>5$.

 \section{Some generalities concerning manifold invariants from heptagon and heptagon 5-cocycles}\label{s:gi}
 
 \subsection{5-cocycle and a quantity invariant under a Pachner move}

   Let $M$ be a closed triangulated PL five-manifold, either oriented, or all our cochains take values in a field of characteristic two. Suppose we want to obtain its invariant in the form $\mathfrak c[M]$, with $\mathfrak c$ being a 5-cochain.
 
  \paragraph{Doing a Pachner move}
  We glue a $\Delta^6$ to the manifold~$M$ by some of its faces~$\Delta^5$ constituting together the lhs of a Pachner move. Denote~$M'$ the new manifold obtained by replacing this lhs with the rhs.
  
   \begin{proposition}
    $\mathfrak c[M] = \mathfrak c[M']$ if $\mathfrak c$ can be extended onto~$\Delta^6$ in such way that $\mathfrak c$ is a (simplicial) cocycle.
   \end{proposition}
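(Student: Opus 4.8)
The plan is to express both $\mathfrak c[M]$ and $\mathfrak c[M']$ as sums of $\mathfrak c$-values over the top simplices, and to control their difference using the cocycle condition on $\Delta^6$. First I would fix a permitted coloring of $M$ (the invariant $\mathfrak c[M]$ being, implicitly, the appropriate sum or pairing over all such colorings, or a fixed one if the coloring is understood to be part of the data); by Proposition~\ref{p:fh} (the full heptagon) the restriction of this coloring to the boundary $\partial C_{\mathrm{ini}} = \partial C_{\mathrm{fin}}$ of the Pachner move extends to a permitted coloring of the rhs cluster as well, and in fact — by the parameterization of Proposition~\ref{p:f} — it extends to a permitted coloring of the whole $\Delta^6$ whose two ``halves'' are exactly $C_{\mathrm{ini}}$ and $C_{\mathrm{fin}}$. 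This is the key geometric input: $M$ and $M'$ differ only inside $\Delta^6$, and a single coloring of $\Delta^6$ simultaneously witnesses both.

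Next I would write out what the cocycle condition $\delta\mathfrak c = 0$ on $\Delta^6$ says. By Definition~\ref{d:cb}, $(\delta\mathfrak c)(r) = \sum_{k=0}^{6} (-1)^k \mathfrak c\bigl(f_k^{(6)}(r)\bigr) = 0$, where the faces $f_k^{(6)}(r)$ are precisely the seven $5$-simplices of $\partial\Delta^6$, split into the $m$ simplices forming $C_{\mathrm{ini}}$ and the $n = 7-m$ forming $C_{\mathrm{fin}}$. Thus, up to the signs $(-1)^k$, the vanishing of $\delta\mathfrak c$ says exactly
\begin{equation*}
\sum_{\Delta^5 \in C_{\mathrm{ini}}} \pm\,\mathfrak c\bigl(r|_{\Delta^5}\bigr) \;=\; \sum_{\Delta^5 \in C_{\mathrm{fin}}} \pm\,\mathfrak c\bigl(r|_{\Delta^5}\bigr).
\end{equation*}
Since $M$ and $M'$ share all their $5$-simplices except those in $C_{\mathrm{ini}}$ resp.\ $C_{\mathrm{fin}}$, and since the coloring of $M$ agrees with that of $M'$ on the common part and with $r$ on the respective clusters, subtracting $\mathfrak c[M] - \mathfrak c[M']$ leaves exactly the difference of the two sides above, which is $0$.

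The step I expect to require the most care is the bookkeeping of \emph{orientations and signs}: matching the intrinsic signs $(-1)^k$ coming from the coboundary formula on $\Delta^6$ with the orientation-induced signs with which each $5$-simplex enters $\mathfrak c[M]$ and $\mathfrak c[M']$. In the oriented case one checks that the induced orientation on $\partial\Delta^6$ assigns opposite signs to a face shared by $C_{\mathrm{ini}}$ and the outside versus the same face shared by $C_{\mathrm{fin}}$ and the outside, which is precisely what is needed for the cancellation; in characteristic two all signs are irrelevant and the argument collapses to a plain count, which is why the hypothesis explicitly allows that alternative. A minor additional point worth a sentence is well-definedness: one must note (via Proposition~\ref{p:f}) that the extension of the coloring to $\Delta^6$ is unique modulo $\omega$, so $\mathfrak c$ evaluated on it is unambiguous, and hence the equality $\mathfrak c[M] = \mathfrak c[M']$ does not depend on auxiliary choices.
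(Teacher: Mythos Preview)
Your core argument---write the cocycle condition $(\delta\mathfrak c)(\Delta^6)=0$ as a signed sum over the seven $5$-faces, split these into $C_{\mathrm{ini}}$ and $C_{\mathrm{fin}}$, and conclude that the two clusters contribute equally once orientations are matched---is exactly the paper's proof, which it states in a single sentence.

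Where you overshoot is in the surrounding apparatus. At this point in the paper the proposition is a purely \emph{simplicial} statement: $\mathfrak c$ is just a $5$-cochain on the triangulation, and the hypothesis already hands you its extension to~$\Delta^6$ as a cocycle. There is no need to invoke permitted colorings or Propositions~\ref{p:f} and~\ref{p:fh} to manufacture or extend a coloring; that machinery enters only in the next subsection (and in earnest in Theorem~\ref{th:uc}), where one explains how a parametric heptagon $5$-cocycle together with~$\omega$ \emph{produces} such a simplicial~$\mathfrak c$. Your final ``well-definedness'' remark is likewise unnecessary here---and the uniqueness claim is not quite right: the extension of a permitted coloring into the interior of~$C_{\mathrm{fin}}$ need not be unique (the paper handles this non-uniqueness separately in the proof of Theorem~\ref{th:uc}). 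None of this damages the argument, since the hypothesis of the proposition already grants the extended cocycle; it just means several paragraphs of your proposal can be deleted.
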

   
   \begin{proof}
    The lhs and rhs of Pachner move make equal contributions due to $\mathfrak c$ being a cocycle, and taking orientations into account.
   \end{proof}
 
 \subsection{Parametric heptagon 5-cocycle and simplicial 5-cocycle}
  Here our simplicial 3-cocycle~$\omega$ comes into play. Let it be given on the whole~$M$, and note that there is no problem in extending $\omega$ onto~$\Delta^6$, even with our technical condition~\eqref{onv} (if the field of coefficients is large enough).
  
Let the value of cochain $\mathfrak c$ on each $\Delta^5$ be the same as the value of a \emph{fixed} parametric heptagon 5-cocycle, corresponding to the restriction of $\omega$ on this $\Delta^5$. We see that we obtain this way the desired simplicial 5-cocycle from parametric heptagon 5-cocycle and given $\omega$.

 \section{Bilinear heptagon 4-cocycle}\label{s:b4}

 There is a remarkable symmetric bilinear form depending on two (simplicial) 3-cocycles $\nu$ and $\eta$ on a simplex~$\Delta ^4$. We introduce first 3-cochain~$\mu$ as follows:
\begin{equation}\label{mu}
\mu _{ijkl} \stackrel{\mathrm{def}}{=} \frac{\nu _{ijkl}\eta _{ijkl}}{\omega _{ijkl}},
\end{equation}
and then define the bilinear form as the value on~$\Delta^4 = ijklm$ of its simplicial \emph{coboundary}:
\begin{equation}\label{bf4}
Q_{\Delta^4}(\nu,\eta) \stackrel{\mathrm{def}}{=} (\delta \mu)( \Delta^4 ) = \frac{\nu _{jklm}\,\eta _{jklm}}{\omega _{jklm}} - \dots + \frac{\nu _{ijkl}\,\eta _{ijkl}}{\omega _{ijkl}}
\end{equation}

Introduce permitted colorings $r_1$ and $r_2$ of $\Delta ^4$ as the equivalence classes of $\nu$ and~$\eta$, respectively, modulo~$\omega$.

\begin{proposition}\label{p:Q} 
$Q_{\Delta^4}(\nu,\eta)$ depends actually only on  $r_1$ and $r_2$.
\end{proposition}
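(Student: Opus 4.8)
The plan is to show that $Q_{\Delta^4}(\nu,\eta)$ is unchanged when we replace $\nu$ by $\nu + c\,\omega$ for any constant $c\in F$, and symmetrically when we replace $\eta$ by $\eta + c'\,\omega$. By bilinearity of $Q$ in its two arguments (which is immediate from the definition \eqref{mu}--\eqref{bf4}), it suffices to treat one argument at a time, and in fact it suffices to check that $Q_{\Delta^4}(\omega,\eta) = 0$ for every 3-cocycle $\eta$; the symmetric statement $Q_{\Delta^4}(\nu,\omega)=0$ then follows since $Q$ is symmetric, or can be verified the same way. So the crux is a single identity: the coboundary on $\Delta^4$ of the 3-cochain $\mu_{ijkl} = \omega_{ijkl}\eta_{ijkl}/\omega_{ijkl} = \eta_{ijkl}$ vanishes.

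First I would substitute $\nu=\omega$ into \eqref{mu}: the $\omega_{ijkl}$ in numerator and denominator cancel (using condition \eqref{onv}, so the division is legitimate), leaving $\mu_{ijkl} = \eta_{ijkl}$. Then \eqref{bf4} reads
\begin{equation*}
Q_{\Delta^4}(\omega,\eta) = (\delta\mu)(ijklm) = (\delta\eta)(ijklm) = \eta_{jklm} - \eta_{iklm} + \eta_{ijlm} - \eta_{ijkm} + \eta_{ijkl}.
\end{equation*}
But $\eta$ is a simplicial 3-cocycle, so $\delta\eta = 0$ identically, and in particular this alternating sum over the five tetrahedral faces of $ijklm$ is zero. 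Hence $Q_{\Delta^4}(\omega,\eta)=0$. By the symmetry of $Q$ in $\nu$ and $\eta$, also $Q_{\Delta^4}(\nu,\omega)=0$. Therefore $Q_{\Delta^4}(\nu+c\,\omega,\eta+c'\,\omega) = Q_{\Delta^4}(\nu,\eta) + c\,Q_{\Delta^4}(\omega,\eta) + c'\,Q_{\Delta^4}(\nu,\omega) + cc'\,Q_{\Delta^4}(\omega,\omega) = Q_{\Delta^4}(\nu,\eta)$, which is exactly the assertion that $Q_{\Delta^4}$ descends to a function of the classes $r_1 = [\nu]$ and $r_2 = [\eta]$ modulo $\omega$.

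I do not expect a genuine obstacle here; the only points requiring a word of care are (i) noting that $\omega_{ijkl}\ne 0$ makes the cancellation in $\mu$ valid (otherwise $\mu$ would be a $0/0$), and (ii) making explicit that $Q$ is bilinear and symmetric so that the single computation $Q(\omega,\eta)=0$ propagates to both arguments. If one prefers not to invoke symmetry, the computation $Q_{\Delta^4}(\nu,\omega)=0$ is literally identical, since $\mu_{ijkl}$ becomes $\nu_{ijkl}$ and again the coboundary of the cocycle $\nu$ vanishes. Either way the proof is a two-line reduction to $\delta(\text{cocycle})=0$.
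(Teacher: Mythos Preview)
Your proof is correct and follows exactly the same approach as the paper: the paper's one-line proof simply states that $Q_{\Delta^4}(\nu,\eta)$ ``clearly vanishes if either $\nu$ or~$\eta$ is proportional to~$\omega$'', and you have supplied the details behind that clause (the cancellation $\omega_{ijkl}\eta_{ijkl}/\omega_{ijkl}=\eta_{ijkl}$ and the cocycle condition $\delta\eta=0$), together with the bilinearity reduction.
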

           
\begin{proof}
This follows from the fact that $Q_{\Delta^4}(\nu,\eta)$ clearly vanishes if either $\nu$ or~$\eta$ is proportional to~$\omega$.
\end{proof}
  
  \begin{proposition}
  The heptagon 4-cochain whose value is defined on a double coloring $r = (r_1, r_2)$ of each simplex $\Delta^4$ as $Q_{\Delta^4}(\nu,\eta)$, where $\nu$ and $\eta$ represent $r_1$ and $r_2$, is a cocycle.
  \end{proposition}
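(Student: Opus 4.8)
The plan is to compute the heptagon coboundary $(\delta Q)$ on an arbitrary $\Delta^5 = i_0 i_1 \dots i_5$ and show it vanishes identically. By Definition~\ref{d:cb}, $(\delta Q)(r)$ is the alternating sum over the six 4-faces of $\Delta^5$ of $Q_{f_k^{(5)}(\Delta^4)}(\nu,\eta)$, where $\nu,\eta$ are any fixed 3-cocycle representatives of the two components of the double coloring $r$ (by Proposition~\ref{p:Q} the choice of representatives is irrelevant, and a 3-cocycle on $\Delta^5$ restricts to 3-cocycles on each 4-face). But each $Q_{\Delta^4}(\nu,\eta)$ is itself, by~\eqref{bf4}, the simplicial coboundary $(\delta\mu)(\Delta^4)$ of the fixed 3-cochain $\mu$ defined in~\eqref{mu}, which makes sense globally on $\Delta^5$ since $\nu$, $\eta$, $\omega$ are all defined there and $\omega_{ijkl}\ne 0$ on every tetrahedron by~\eqref{onv}. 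Hence the heptagon 4-cochain $r\mapsto Q_{\Delta^4}(\nu,\eta)$ is, face by face, the restriction of the single simplicial 3-cochain $\mu$ on $\Delta^5$ evaluated after applying $\delta$.

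The key identity is then that the heptagon coboundary operator, applied to a heptagon cochain that happens to come from a simplicial cochain $\mu$ via $\delta_{\mathrm{simpl}}$, reproduces $\delta_{\mathrm{simpl}}^2 \mu$ up to signs. Concretely: $(\delta_{\mathrm{hepta}} Q)(\Delta^5) = \sum_{k=0}^{5}(-1)^k (\delta_{\mathrm{simpl}}\mu)\bigl(f_k^{(5)}(\Delta^5)\bigr) = (\delta_{\mathrm{simpl}}^2 \mu)(\Delta^5) = 0$. The only thing to check carefully is that the sign conventions and the face-ordering used in Definition~\ref{d:cb} for heptagon cochains match the standard simplicial coboundary conventions, so that the composite is genuinely $\delta^2$ and not some sign-twisted variant. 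Since~\eqref{cb} uses exactly the alternating sum over the standard ordered faces $i_0\dots\widehat{i_k}\dots i_m$, and~\eqref{bf4} uses the same convention for $\delta\mu$, the two applications of $\delta$ compose into the honest simplicial $\delta^2$, which is zero. I would therefore write: the first application of $\delta$ turns $\mu$ into $Q$ on 4-faces; the second application of $\delta$ (the heptagon one) is literally the simplicial coboundary again; $\delta^2=0$ finishes it.

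I expect no serious obstacle here; the statement is essentially the observation ``a coboundary is a cocycle'' transported through the bijection between our heptagon cochains built from $\mu$ and ordinary simplicial cochains. The one point deserving a sentence of care is the parametric/$\omega$-dependence: the cochain is well-defined as a heptagon cochain (as opposed to merely a function of cocycle representatives) precisely because of Proposition~\ref{p:Q}, and $\omega$ being fixed on all of $\Delta^5$ makes $\mu$ a single well-defined simplicial 3-cochain there, so the telescoping $\delta^2\mu=0$ applies without any compatibility issue between faces. I would present the argument in three short steps: (1) fix representatives $\nu,\eta$ on $\Delta^5$ and define $\mu$ by~\eqref{mu}; (2) observe $Q_{\Delta^4} = (\delta\mu)(\Delta^4)$ for every 4-face, so $Q$ restricted to 4-faces of $\Delta^5$ equals $\delta\mu$; (3) conclude $(\delta Q)(\Delta^5) = (\delta\delta\mu)(\Delta^5) = 0$, and invoke Proposition~\ref{p:Q} to note the result is independent of the chosen representatives, hence a genuine identity of heptagon cochains.
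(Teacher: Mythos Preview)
Your proposal is correct and follows essentially the same route as the paper: pick global representatives $\nu,\eta$ on~$\Delta^5$, recognize $Q_{\Delta^4}=(\delta\mu)(\Delta^4)$ from~\eqref{bf4}, and conclude $(\delta Q)(\Delta^5)=(\delta^2\mu)(\Delta^5)=0$. The paper compresses this into two sentences and gestures at Proposition~\ref{p:f} for the existence of global representatives (which for a single~$\Delta^5$ is immediate from the definition of permitted colorings anyway); your extra paragraph on matching sign conventions between~\eqref{cb} and the simplicial coboundary is sound but could be trimmed to a single remark.
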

  
  We will take the liberty of denoting this cocycle by the same letter~$Q$.
  
  \begin{proof}
   Considering the values of the coboundary $\delta Q$ on a given $\Delta^5$, we first note that there is no problem to find a single $\nu$ and a single $\eta$ for the whole~$\Delta^5$ (compare the proof of Proposition~\ref{p:f}). Then, the desired statement follows from the coboundary form~\eqref{bf4} of~$Q_{\Delta^4}(\nu,\eta)$.
  \end{proof}

 \section{Bipolynomial 5-cocycles in finite characteristics from the bilinear 4-cocycle in characteristic zero}\label{s:b5}
 
 \subsection{Bilinear and bipolynomial heptagon cocycles}
   As the value of our cocycle $Q$ for each $\Delta^4$ depends bilinearly on $r_1$ and~$r_2$ comprising together a double coloring of that~$\Delta^4$, we call $Q$ \emph{bilinear cocycle}.
   
   Similarly, we will introduce soon heptagon cocycles with values depending \emph{polynomially} on each of $r_1$ and~$r_2$ (and this time they will be 5-cocycles). Naturally, we will call them \emph{bipolynomial} cocycles.
   
   \smallskip
   
   We continue to use notations $\nu$ and $\eta$ for 3-cocycles representing permitted colorings $r_1$ and~$r_2$ of a given simplex (and forming together its double permitted coloring).

 \subsection{From bilinear 4-cocycle in characteristic 0 to bipolynomial 5-cocycle in characteristic~$p$}
 
 We take now cocycle $Q$ in characteristic zero. Specifically, let all components of all $\nu$, $\eta$, and~$\omega$ be \emph{indeterminates} over field~$\mathbb Q$.
 
 \smallskip
 
Let $p$ be a prime number, $k$ a natural number, and consider the following four consecutive operations on cocycle~$Q$.
\begin{enumerate}\itemsep 0pt
 \item\label{i:r} Raise each expression~$Q_{\Delta^4}(\nu,\eta)$ to the power~$p^k$.
 \item\label{i:b} Take the coboundary of the resulting cochain with components~$\bigl( Q_{\Delta^4}(\nu,\eta) \bigr)^{p^k}$.
 \item\label{i:d} Divide the result by~$p$.
 \item\label{i:e} Reduce the result modulo~$p$. In simple words: for a ratio $P/Q$ of polynomials with $P$ and $Q$ having integer coefficients and no common factors, reduce all coefficients of both $P$ and $Q$ modulo~$p$, then treat all indeterminates as indeterminates over the simple field~$\mathbb F_p$.
\end{enumerate}

More rigorously, item~\ref{i:e} means the following. For a given~$\Delta^5$, denote $\beta$, $\beta _{\nu}$ and~$\beta _{\eta}$ the respective maximal sets of linearly independent components of (the restrictions on~$\Delta^5$ of) cocycles $\omega$, $\nu$ and~$\eta$. Denote $\mathcal F = \mathbb Q(\beta)$ ---the field of rational functions of components of~$\omega$

Introduce a \emph{discrete valuation}~\cite{discr-val} on the \emph{polynomial ring}~$\mathcal F[\upbeta_{\nu},\upbeta_{\eta}]$ as follows. Let $P(\upbeta,\upbeta_{\nu},\upbeta_{\eta})$ and~$Q(\upbeta)$ be polynomials with \emph{integer coefficients} and such that there is at least one coefficient not divisible by~$p$ in both $P$ and~$Q$, then the valuation is
\begin{equation}\label{pl}
p^l \, \frac{P(\upbeta,\upbeta_{\nu},\upbeta_{\eta})}{Q(\upbeta)}\, \mapsto\, l.
\end{equation}

Reduction modulo~$p$ of the l.h.s.\ of~\eqref{pl} can be done if $l\ge 0$, and gives zero for $l>0$ and
\begin{equation*}
\frac{P(\upbeta,\upbeta_{\nu},\upbeta_{\eta})}{Q(\upbeta)}
\end{equation*}
for $l=0$. Here and below we take the liberty of denoting the polynomials and indeterminates over the prime field~$\mathbb F_p$ by the same letters as over~$\mathbb Q$.

\begin{proposition}\label{p:5c}
The above four steps \ref{i:r}--\ref{i:e} lead correctly from bilinear cocycle~$Q$ to a bipolynomial 5-cocycle of degrees $p^k$ in both permitted colorings induced by $\upbeta_{\nu}$ and~$\upbeta_{\eta}$ over the field~$\mathcal F_p = \mathbb F_p (\upbeta)$. 
\end{proposition}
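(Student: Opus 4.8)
The plan is to verify each of the four operations separately, showing that each one either stays within the world of parametric heptagon cochains or improves the situation so that the next step is legal, and that the composite is a cocycle of the claimed bidegree. The key observation to exploit throughout is that $Q$ is already a cocycle over $\mathbb{Q}$ (Proposition on $Q$), so $\delta Q = 0$ identically in the indeterminates $\upbeta,\upbeta_\nu,\upbeta_\eta$; everything downstream is manipulating this identity.

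\textbf{Step~\ref{i:r} (raising to the power $p^k$).}
First I would observe that $\bigl(Q_{\Delta^4}(\nu,\eta)\bigr)^{p^k}$ still depends only on the permitted colorings $r_1,r_2$ of $\Delta^4$, since $Q_{\Delta^4}$ does (Proposition~\ref{p:Q}) and raising to a power preserves this. So after Step~\ref{i:r} we genuinely have a parametric heptagon $4$-cochain, now of bidegree $p^k$. The dependence is polynomial in the components of $\nu$ and $\eta$ over the field $\mathcal{F}=\mathbb{Q}(\upbeta)$.

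\textbf{Step~\ref{i:b} (coboundary) and Step~\ref{i:d} (division by $p$).}
The content of these two steps together is that $\delta$ of the $p^k$-th power is divisible by $p$ as an element of the polynomial ring $\mathcal{F}[\upbeta_\nu,\upbeta_\eta]$ with a suitable integral structure. Here I would argue as follows. Fix a $\Delta^5 = ijklmn$ and a single $\nu$, $\eta$ on it (possible as in the proof of the cocycle property of $Q$). Write $a_k = Q_{f_k\Delta^5}(\nu,\eta)$ for the six $4$-faces, so that $\sum_k (-1)^k a_k = 0$ by the cocycle identity for $Q$. We must show $\sum_k (-1)^k a_k^{p^k} \equiv 0 \pmod p$. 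The cleanest route is the ``freshman's dream'' in characteristic $p$ applied with care over $\mathbb{Z}$: in the ring $\mathbb{Z}[\upbeta][\upbeta_\nu,\upbeta_\eta]$ localized away from denominators coming from $\omega$, one has $\bigl(\sum_k (-1)^k a_k\bigr)^{p^k} = 0$, and expanding by the multinomial theorem, every term except the ``diagonal'' ones $(\pm a_k)^{p^k}$ carries a multinomial coefficient divisible by $p$; hence $\sum_k (\pm a_k)^{p^k} \equiv \bigl(\sum_k \pm a_k\bigr)^{p^k} = 0 \pmod p$, taking account of signs via $(-1)^{p^k}=-1$ for $p$ odd and the characteristic-$2$ convention for $p=2$. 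This is exactly the statement that $\delta\bigl(Q^{p^k}\bigr)$ is $p$ times an element that is still integral, which is what legitimizes Step~\ref{i:d}. One should record carefully that the denominators introduced are only powers of the $\omega_{ijkl}$, i.e.\ lie in $\mathbb{Z}[\upbeta]$, so the quotient $P/Q$ has $Q\in\mathbb{Z}[\upbeta]$ as required by the valuation setup~\eqref{pl}.

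\textbf{Step~\ref{i:e} (reduction mod $p$) and the cocycle conclusion.}
After Step~\ref{i:d} we have, for each $\Delta^4$ viewed as a face of a $\Delta^5$, an element of valuation $\ge 0$ with respect to~\eqref{pl}, so reduction mod $p$ is well defined and produces a genuine heptagon $5$-cochain over $\mathcal{F}_p=\mathbb{F}_p(\upbeta)$. Here I would note that what we actually built in Step~\ref{i:d} was $\delta$ of the $4$-cochain $Q^{p^k}/p$... no: more precisely, $(1/p)\,\delta(Q^{p^k})$ is itself a $5$-cochain, but to call the construction a $5$-\emph{co}cycle we must take $\delta$ once more and check it vanishes mod $p$. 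That follows because $\delta\circ\delta = 0$ over $\mathbb{Q}$ already kills $\delta\bigl((1/p)\delta(Q^{p^k})\bigr) = (1/p)\,\delta\delta(Q^{p^k}) = 0$ exactly, hence a fortiori mod $p$; one only has to check that no denominator blows the valuation below $0$ when restricting to a $\Delta^6$, which again is clear since all denominators are powers of $\omega$-components and $\omega$ extends to $\Delta^6$ with all $\omega_{ijkl}\ne 0$. The bidegree claim is immediate: the whole construction is built by polynomial operations ($p^k$-th power, addition, division by the scalar $p$) out of the bilinear $Q$, so the result is homogeneous of degree $p^k$ in $\upbeta_\nu$ and separately of degree $p^k$ in $\upbeta_\eta$; passing to the colorings $r_1,r_2$ (i.e.\ working modulo $\omega$) is harmless by the same argument as in Proposition~\ref{p:Q}, since each power $Q^{p^k}$ already descends to the colorings.

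\textbf{Main obstacle.}
I expect the delicate point to be Step~\ref{i:d}: making precise the integrality statement ``$\delta(Q^{p^k})$ is exactly $p$ times something of valuation $\ge 0$,'' including a clean bookkeeping of which denominators can appear. One must be sure the common denominator of $\delta(Q^{p^k})$ is a polynomial in $\upbeta$ only (no $\upbeta_\nu,\upbeta_\eta$ in denominators) and has a coefficient not divisible by $p$, so that the valuation~\eqref{pl} is literally applicable and the ``divide by $p$'' does not accidentally interact with the denominator. The multinomial-coefficient divisibility itself is standard (Lucas/Kummer), but the case $p=2$ needs the separate sign convention noted in Section~\ref{s:gi}, and I would flag that explicitly rather than sweep signs under the rug.
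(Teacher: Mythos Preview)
Your proposal is correct and follows essentially the same route as the paper: the divisibility in Step~\ref{i:d} is obtained from the vanishing first power sum~\eqref{cvxvyv} via a multinomial/freshman's-dream expansion (the paper phrases this as ``express one summand through the rest and substitute''), the cocycle property in Step~\ref{i:e} comes from $\delta\circ\delta=0$ over~$\mathbb Q$ (the paper says ``the result of~\ref{i:d} is a coboundary and hence a cocycle''), and the dependence only on colorings is inherited from Proposition~\ref{p:Q}. Your extra care about denominators lying in $\mathbb Z[\upbeta]$ and about signs when $p=2$ is more explicit than the paper, but not a different argument.
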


\begin{proof}
We need to prove the following three points.

\textit{Feasibility of item~\ref{i:d}.}
Let $w$ be an oriented 5-simplex, and $\epsilon _v^{(w)}$ the incidence number between~$w$ and its face~$v$. As $Q$ is a cocycle,
\begin{equation}\label{cvxvyv}
\sum _{v\subset w} \epsilon _v^{(w)} Q_v(\nu, \eta) = 0.
\end{equation}
Our discrete valuation for all summands in~\eqref{cvxvyv} is zero (taking into account~\eqref{bf4}). Using~\eqref{cvxvyv}, we can express one of the summands through the rest of them and substitute in the coboundary of~$\bigl( Q_v(\nu, \eta) \bigr)^{p^k}$, that is, in
\begin{equation}\label{cxyk}
\sum _{v\subset w} \epsilon _v^{(w)} \bigl( Q_v(\nu, \eta) \bigr)^{p^k}.
\end{equation}
When we expand the result, its coefficients will clearly be all divisible by~$p$.

\textit{The result of~\ref{i:e} is a cocycle.}
This follows from the fact that the result of~\ref{i:d} is a coboundary and hence a cocycle.

\textit{The result of~\ref{i:e} depends indeed only on the pair of \emph{colorings}, determined by our cocycles $\nu$ and~$\eta$ (taken modulo~$p$ of course).}
This follows easily from the construction.
\end{proof}

\subsection{Explicit expression in characteristic two} 
A handy explicit expression for the resulting 5-cocycle can be obtained using \emph{Newton's identities}~\cite[Section~I.2]{Macdonald} for power sums and elementary symmetric functions, with the summands $\epsilon _v^{(w)} Q_v(\nu, \eta)$ in the l.h.s.\ of~\eqref{cvxvyv} taken as variables. Equality~\eqref{cvxvyv} means of course that their \emph{first power sum vanishes}.

In characteristic two, the following symbol is useful in these calculations and writing out the results:
\begin{equation*}
\tilde{\epsilon}_v^{(w)} \,\stackrel{\mathrm{def}}{=}\, \frac{\epsilon _v^{(w)}+1}{2} = 
\begin{cases} 1 & \text{if \ } \epsilon _v^{(w)}=1 \\
              0 & \text{if \ } \epsilon _v^{(w)}=-1 \end{cases}
\end{equation*}

Below, we will mostly concentrate on the case $p=2$, \ $k=1$. The 5-cocycle resulting from the above items \ref{i:r}--\ref{i:e} will be denoted as~$c$. Its value on a 5-simplex~$w$ can be expressed as
\begin{equation}\label{h2}
 c(\nu, \eta) =
 \sum _{\substack{v,v'\subset w\\ v < v'}} Q_v(\nu, \eta) Q_{v'}(\nu, \eta)
 +\sum _{v\subset w} \tilde{\epsilon}_v^{(w)} \bigl( Q_v(\nu, \eta) \bigr)^2,
\end{equation}
where $v$ and~$v'$ are faces of~$w$. We assume in~\eqref{h2} that these faces are \emph{numbered}, and understand their numbers when writing ``\,$v < v'$\,''.

Our calculations for specific manifolds below in Section~\ref{s:c} show that \eqref{h2} is a \emph{nontrivial} cocycle---not a coboundary.

Note, by the way, that in characteristic two, a Frobenius endomorphism applied to~\eqref{cvxvyv} gives
\begin{equation*}
\sum _{v\subset w} \bigl( Q_v(\nu, \eta) \bigr)^2 = 0.
\end{equation*}
Hence, $\tilde{\epsilon}_v^{(w)}$ in~\eqref{h2} can be replaced painlessly by $1-\tilde{\epsilon}_v^{(w)}$, if needed.

\subsection{Explicit expression in characteristic three} 

For $p=3$, \ $k=1$, the value on a 5-simplex~$w$ of the 5-cocycle resulting from items \ref{i:r}--\ref{i:e} is
\begin{equation}\label{h3}
\sum _{\substack{v_1,v_2,v_3\subset w\\ v_1 < v_2 < v_3}} \epsilon _{v_1}^{(w)} \epsilon _{v_2}^{(w)} \epsilon _{v_3}^{(w)}\, Q_{v_1}(\nu, \eta)\, Q_{v_2}(\nu, \eta)\, Q_{v_3}(\nu, \eta),
\end{equation}
where $v_1$, $v_2$ and~$v_3$ are faces of~$w$.

It can be shown by a direct calculation that \eqref{h3} is again a nontrivial cocycle.

\section{Manifold invariant as a `stable bipolynomial' up to linear transforms of its variables}\label{s:sb}

\subsection{A bipolynomial function of two permitted colorings of a PL manifold with a given triangulation and a 3-cocycle}

Let $M$ be now a 5-dimensional \emph{closed} PL manifold.
Let $\upxi = \{\xi _1, \xi _2, \ldots \}$ denote an infinite (countable) set of variables over a simple field~$\mathbb F_p$, and let $F = \mathbb F_p (\upxi)$ be the field of rational functions of these variables with coefficients in~$\mathbb F_p$.

Let there be given a triangulation of~$M$ and a simplicial 3-cocycle~$\omega$ on it whose components~$\omega_{ijkl}$ belong to~$F$.
If $p \ne 2$, let also $M$ be \emph{oriented}.

For a given pair $(\rho,\sigma)$ of permitted colorings of~$M$, consider the value
\begin{equation}\label{sc}
\sum _{\mathrm{all}\;\Delta^5 \subset M} \mathfrak c_{\Delta^5} (\rho,\sigma) ,
\end{equation}
where $\mathfrak c$ is a bipolynomial 5-cocycle in finite characteristic such as \eqref{h2} for $p=2$ or~\eqref{h3} for $p=3$. Expression~\eqref{sc} is a bipolynomial function of permitted colorings $\rho$ and~$\sigma$ ---that is, of the coordinates of vectors $\rho$ and~$\sigma$ in the linear space~$V_p$ of permitted colorings w.r.t.\ some basis.

\subsection{Extracting what is invariant under Pachner moves and changing $\omega$ within its cohomology class}

\begin{definition}\label{d:I}
We define $I(M,\omega)$ as expression~\eqref{sc} considered as a function of the coordinates of vectors $\rho$ and~$\sigma$ and taken up to an $F$-linear change of these coordinates and to a `stabilization'---adding more coordinates on which \eqref{sc} does not depend, or removing such coordinates.
\end{definition}

The dimension of~$V_p$ can change under Pachner moves, so there must be no surprise that we define $I(M,\omega)$ in such a `stable' way.

\begin{theorem}\label{th:uc}
$I(M,\omega)$ is an invariant of the pair ($M$, cohomology class of\/~$\omega$).
\end{theorem}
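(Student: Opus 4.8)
The plan is to prove that $I(M,\omega)$ is unchanged under (a)~Pachner moves on the triangulation of~$M$, and (b)~replacement of $\omega$ by a cohomologous 3-cocycle~$\omega'$; since any two triangulations of a closed PL manifold are related by a finite sequence of Pachner moves, this suffices.

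\textbf{Invariance under Pachner moves.}
Fix $\omega$ and a triangulation, and perform a Pachner move replacing the cluster $C_{\mathrm{ini}}$ by $C_{\mathrm{fin}}$, obtaining~$M'$. First I would extend $\omega$ over the auxiliary $\Delta^6$ whose boundary consists of $C_{\mathrm{ini}}\cup C_{\mathrm{fin}}$, keeping condition~\eqref{onv}; this is possible once $F$ is large enough, and $F$ here contains infinitely many indeterminates, so there is certainly room. By Proposition~\ref{p:f} the permitted colorings of $\partial C_{\mathrm{ini}}=\partial C_{\mathrm{fin}}$ are in bijection with 3-cocycles~$\nu$ (and~$\eta$) on this common boundary modulo~$\omega$, and any such cocycle extends over~$\Delta^6$; so a pair $(\rho,\sigma)$ of permitted colorings of~$M$ restricts to a pair on the boundary that extends to a pair on~$\Delta^6$. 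Because $\mathfrak c$ is a (parametric) heptagon 5-cocycle, $\sum_{\Delta^5\subset\partial\Delta^6}\epsilon_{\Delta^5}^{(\Delta^6)}\,\mathfrak c_{\Delta^5}(\rho,\sigma)=0$, whence the total contribution of $C_{\mathrm{ini}}$ equals that of $C_{\mathrm{fin}}$, with signs handled either by the orientation of~$M$ (general~$p$) or trivially in characteristic two. The 5-simplices outside the cluster are untouched, so the expression~\eqref{sc} for~$M$ and for~$M'$ agree \emph{as functions of the same permitted colorings}. The only subtlety is that the ambient linear space $V_p$ of permitted colorings of~$M$ and of~$M'$ may have different dimensions; but the restriction map to the complement of the cluster identifies the two spaces up to adding/removing coordinates on which~\eqref{sc} does not depend, which is exactly the `stabilization' allowed in Definition~\ref{d:I}. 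Hence $I(M,\omega)=I(M',\omega)$.

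\textbf{Invariance under changing $\omega$ within its cohomology class.}
Write $\omega' = \omega + \delta\gamma$ for a simplicial 2-cochain~$\gamma$ over~$F$; since $F$ is a field of rational functions in infinitely many indeterminates, I can pick $\gamma$ generic enough that $\omega'$ also satisfies~\eqref{onv}. The spaces of permitted colorings modulo~$\omega$ and modulo~$\omega'$ are both quotients of the space of all 3-cocycles on~$M$, and there is a natural $F$-linear isomorphism between them — or, more carefully, a linear identification of the corresponding coordinate vectors, possibly after stabilization. Under this identification the bipolynomial~\eqref{sc} built from~$\omega$ is carried to the one built from~$\omega'$: indeed the local piece $Q_{\Delta^4}(\nu,\eta) = (\delta\mu)(\Delta^4)$ with $\mu_{ijkl} = \nu_{ijkl}\eta_{ijkl}/\omega_{ijkl}$, and likewise the derived bipolynomial 5-cocycle~$\mathfrak c$, are natural in~$\omega$; changing $\omega$ to~$\omega'$ and simultaneously rescaling the representatives $\nu,\eta$ (which is an $F$-linear change on the coloring coordinates) preserves~\eqref{sc}. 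The key point is Proposition~\ref{p:Q}: $Q$ depends only on the colorings, so the answer is insensitive to the choice of representatives, which is what makes the $\omega\mapsto\omega'$ transition a genuine linear change of variables rather than something ill-defined.

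\textbf{The main obstacle.}
The routine-looking cocycle cancellation is not where the difficulty lies; the delicate part is bookkeeping the \emph{linear spaces of permitted colorings} and showing that every identification I need — between $V_p$ for $M$ and for $M'$ across a Pachner move, and between the $\omega$-version and the $\omega'$-version — is, after the allowed stabilization, a genuine $F$-linear isomorphism intertwining the two bipolynomials. In particular I must check that the parametric 5-cocycle~$\mathfrak c$ really is a well-defined element of $C^5_{\mathrm{par}}$ (continuous/algebraic in~$\omega$, and compatible with restriction), so that extending $\omega$ and the colorings over~$\Delta^6$ is legitimate; and I must make sure the `genericity' choices of $\omega$-extension and of $\gamma$ do not affect the stable class~$I(M,\omega)$, which follows because any two generic choices are themselves connected by the same kind of linear-plus-stabilization equivalence. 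Granting Propositions~\ref{p:f}, \ref{p:fh}, \ref{p:Q}, \ref{p:5c} and the Pachner-move invariance proposition of Section~\ref{s:gi}, these are all verifications rather than new ideas, and the theorem follows.
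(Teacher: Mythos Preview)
Your Pachner-move argument in part~(a) is essentially the paper's own: extend $\omega$ and the colorings over~$\Delta^6$, invoke the 5-cocycle condition to match the contributions of $C_{\mathrm{ini}}$ and~$C_{\mathrm{fin}}$, and absorb the change in $\dim V_p$ into the stabilization of Definition~\ref{d:I}. That part is fine.

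Part~(b) has a genuine gap. You assert a ``natural $F$-linear isomorphism'' between permitted colorings modulo~$\omega$ and modulo~$\omega'=\omega+\delta\gamma$ that carries one bipolynomial to the other, but you never construct it, and the obvious candidates fail. The local formula $Q_{\Delta^4}(\nu,\eta)=\sum \pm\, \nu_{ijkl}\eta_{ijkl}/\omega_{ijkl}$ depends on~$\omega$ through the denominators; there is no linear ``rescaling of the representatives $\nu,\eta$'' that turns $\nu\eta/\omega$ into $\nu\eta/\omega'$ componentwise while keeping $\nu,\eta$ cocycles, so that phrase is doing work it cannot do. Moreover, on each simplex the two quotients (3-cocycles mod~$\omega$ versus mod~$\omega'$) are by \emph{different} one-dimensional subspaces, so even identifying the coloring spaces is not canonical. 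This step is not a ``verification''; an actual idea is missing.

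The paper supplies that idea by reducing~(b) to~(a). It suffices to treat an elementary coboundary $\beta\,\delta(ijk)$; one performs Pachner moves until the triangle~$ijk$ is no longer in the triangulation (so $\delta(ijk)$ is literally zero there), and then performs the inverse moves, reinstalling~$ijk$ with the modified~$\omega$. Since~(a) is already proved, each step preserves~$I$. The infinite supply of indeterminates~$\xi_\ell$ is used not for ``genericity of~$\gamma$'' as you suggest, but to keep condition~\eqref{onv} valid on the intermediate triangulations: one first adds $\xi_\ell\,\delta(ijk)$ with a fresh~$\xi_\ell$ and only afterwards specializes $\xi_\ell\mapsto\beta$, using that $I$ does not depend on~$\xi_\ell$.
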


\begin{proof}
\textbf{Invariance under Pachner moves.}
Recall that a Pachner move replaces a cluster~$C_{\mathrm{ini}}$ of 5-simplices with another cluster~$C_{\mathrm{fin}}$ in such way that these clusters form together~$\partial\Delta^6$---the boundary of a 6-simplex.

Let $\nu$ and~$\eta$ denote 3-cocycles representing $\rho$ and~$\sigma$ locally---on the simplices involved in the Pachner move.

First, we can extend $\omega$, $\nu$ and~$\eta$ onto~$C_{\mathrm{fin}}$ (glued to~$M$ by the boundary $\partial C_{\mathrm{fin}} = \partial C_{\mathrm{ini}}$). Concerning~$\omega$, we note that, once we have then removed~$C_{\mathrm{ini}}$, our new~$\omega$ obviously belongs to the same cohomology class---just because the operation was local, made within a piecewise linear 5-ball. Concerning $\nu$ and~$\eta$, we note that we have thus extended the corresponding permitted colorings $\rho$ and~$\sigma$.

Second, we are dealing with a polygon cocycle, hence
\begin{equation}\label{wD}
\pm\! \sum _{\Delta^5\subset \Delta^6}\! \mathfrak c_{\Delta^5}(\rho,\sigma)\; = \sum _{\Delta^5\subset C_{\mathrm{ini}}}\! \mathfrak c_{\Delta^5}(\rho,\sigma)\; - \! \sum _{\Delta^5\subset C_{\mathrm{fin}}}\! \mathfrak c_{\Delta^5}(\rho,\sigma) = 0.
\end{equation}
The minus sign in the middle part of~\eqref{wD} is due to the fact that the mutual orientation of the two clusters induced by an orientation of~$\Delta^6$ is \emph{opposite} to their mutual orientation in the situation when one of them replaces the other within a triangulation of~$M$. Hence, expression~\eqref{sc} remains the same under a Pachner move.

\smallskip

Third, take coordinates of vectors $\rho$ and~$\sigma$ as follows:
\begin{itemize}\itemsep 0pt
 \item for both the lhs and rhs of the Pachner move---respective maximal sets of linearly independent coordinates of $\rho$ and~$\sigma$ on the unchanged part of~$M$, that is the boundary $\partial C_{\mathrm{fin}} = \partial C_{\mathrm{ini}}$ plus the exterior of the move,
 \item plus the remaining linearly independent coordinates belonging to the inner part of the lhs or rhs, respectively.
\end{itemize}

Expression~\eqref{sc} does not depend on the latter kind of coordinates (any such dependence would make it possible to change just one term in the middle part of~\eqref{wD}), but their number may (and does) change.

\smallskip

\textbf{Invariance under adding a simplicial 3-coboundary to~$\omega$.}
It is enough to consider just adding a multiple of the coboundary of 2-cochain taking value~1 on triangle~$ijk$ and~0 on other triangles. We denote this coboundary simply as $\delta(ijk)$, and its mentioned multiple as~$\beta \delta(ijk)$. To show that $I$ does not change,
we make a few Pachner moves so as to make~$ijk$ disappear, and then the inverse Pachner moves in the inverse order,
returning thus to the initial triangulation, and
introducing the properly changed~$\omega$ on this way.
One small complication that might occur on this way is a possible violation of condition~\eqref{onv} for some tetrahedron appearing on the way. To avoid this, we take a new---not involved in any formulas before we started the inverse Pachner moves---indeterminate~$\xi _{\ell}$ from our infinite set~$\upxi$, and add $\xi _{\ell\,} \delta(ijk)$ instead of $\beta \delta(ijk)$. After all moves are done, we can assign value~$\beta$, or in fact any value, to~$\xi _{\ell}$, because $I$ does not anyhow depend on it.
\end{proof}

\subsection{Simplicial 3-coboundaries, g-colorings, and the sum~\eqref{sc}}\label{ss:ipg}

Let manifold~$M$ be as above: closed PL 5-manifold, oriented if characteristic $p\ne 2$.

\begin{definition}
 We call its permitted coloring a \emph{g-coloring} if it corresponds to a (global) simplicial 3-cocycle that is the \emph{coboundary}~$\delta\beta$ of some 2-cocycle~$\beta$.
\end{definition}

That is, the permitted colorings of any 5-simplex~$\Delta^5$ are determined by the restriction of 3-cocycle~$\delta \beta$ on~$\Delta^5$. We use the term ``g-coloring'' to be in conformity with~\cite{odd-gons}.

\begin{proposition}\label{p:pf}
Expression~\eqref{sc} does not change if we add a g-coloring to either~$\rho$ or~$\sigma$, or both (for a given triangulation of~$M$).
\end{proposition}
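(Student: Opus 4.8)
The goal is to show that the sum \eqref{sc}, as a polynomial function of the coordinates of $\rho$ and $\sigma$, is unchanged when we replace $\rho$ by $\rho + \rho_0$ (or likewise $\sigma$ by $\sigma + \sigma_0$) where $\rho_0$ is a g-coloring, that is, the permitted coloring induced by a global simplicial 3-coboundary $\delta\beta$. The natural approach is to work simplex by simplex using the explicit bilinear form $Q_{\Delta^4}$ of \eqref{bf4}, from which the bipolynomial cocycle $\mathfrak c$ is built by the Newton-identity recipe of Section~\ref{s:b5}; since $\mathfrak c_{\Delta^5}(\rho,\sigma)$ is a polynomial in the values $Q_v(\nu,\eta)$ over all $4$-faces $v$ of $\Delta^5$, it suffices to understand how each $Q_v$ changes, and then how the $\Delta^5$-sum of those polynomials changes, when $\nu$ is replaced by a representative of $\rho+\rho_0$.

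First I would fix representatives: locally on each simplex, let $\nu$ represent $\rho$ and $\eta$ represent $\sigma$; the g-coloring $\rho_0$ is represented \emph{globally} by the 3-cocycle $\delta\beta$ (this is the key structural input — the representative of a g-coloring can be taken consistently on all of $M$, not just simplex by simplex). Then $\rho+\rho_0$ is represented by $\nu+\delta\beta$. Plugging into \eqref{mu}--\eqref{bf4}, and using that $Q$ is bilinear and $Q_{\Delta^4}(\delta\beta|_{\Delta^4},\eta)$ is in general \emph{not} zero, I get
\begin{equation*}
Q_{\Delta^4}(\nu+\delta\beta,\eta) = Q_{\Delta^4}(\nu,\eta) + Q_{\Delta^4}(\delta\beta,\eta).
\end{equation*}
Here $Q_{\Delta^4}(\delta\beta,\eta) = (\delta\mu')(\Delta^4)$ where $\mu'_{ijkl} = (\delta\beta)_{ijkl}\,\eta_{ijkl}/\omega_{ijkl}$ — a 3-cochain globally defined on $M$. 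So the change in each term of \eqref{sc} is a coboundary $\delta\mu'$ evaluated on $\Delta^4$-faces, and I must propagate this through the bipolynomial step.

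The heart of the argument is then a telescoping/cocycle cancellation on the closed manifold $M$. Writing $\mathfrak c_{\Delta^5}$ as the $p=2$ polynomial \eqref{h2} (or the analogue \eqref{h3}), expand $\mathfrak c_{\Delta^5}(\rho+\rho_0,\sigma)$ in terms of the shifted $Q_v$'s. The terms purely in the old $Q_v(\nu,\eta)$ reproduce $\mathfrak c_{\Delta^5}(\rho,\sigma)$; the remaining terms involve at least one factor $Q_v(\delta\beta,\eta)=(\delta\mu')(v)$. For the cross terms $\sum_{v<v'}\bigl(Q_v(\nu,\eta)+Q_v(\delta\beta,\eta)\bigr)\bigl(Q_{v'}(\nu,\eta)+Q_{v'}(\delta\beta,\eta)\bigr)$ and the square terms, I would use the cocycle identity $\sum_{v\subset w}\epsilon_v^{(w)} Q_v(\delta\beta,\eta)=0$ (since $(\delta\mu')$ is a simplicial cochain and this is just $\delta(\delta\mu')=0$ on $w$), exactly as in the feasibility argument in the proof of Proposition~\ref{p:5c}: the first power sum of the shift-variables $\epsilon_v^{(w)}Q_v(\delta\beta,\eta)$ vanishes, so by Newton's identities the correction to $\mathfrak c_{\Delta^5}$ is itself a coboundary on $w$, of a cochain built from $\mu'$. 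Summing over all $\Delta^5\subset M$, this correction — being globally a simplicial coboundary evaluated on a closed manifold — sums to zero. The same argument with the roles of $\nu,\eta$ swapped handles the shift of $\sigma$, and the general bilinearity of $Q$ handles shifting both at once (the genuinely new term being $Q_v(\delta\beta,\delta\beta_0)$, again a global coboundary).

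The step I expect to be the main obstacle is the \emph{global} bookkeeping: verifying that the per-$\Delta^5$ correction terms, after invoking Newton's identities, really assemble into \emph{one} globally defined simplicial cochain on $M$ whose coboundary is the total correction — in other words, that the various local choices of representatives $\nu$ for $\rho$ patch together compatibly with the globally chosen $\delta\beta$ for $\rho_0$. The subtlety is that $\nu$ is only defined modulo $\omega$ on each simplex and need not be globally consistent, whereas $\delta\beta$ is; one must check that the products $(\delta\beta)_{ijkl}\,\eta_{ijkl}/\omega_{ijkl}$ and the higher-degree combinations entering \eqref{h2} depend only on data that \emph{is} globally well-defined (which they do, precisely because a change $\nu\mapsto\nu+c\,\omega$ leaves every $Q_v$ unchanged by Proposition~\ref{p:Q}, and $Q_v(\delta\beta,\eta)$ only uses $\delta\beta$, $\eta$ mod $\omega$, and $\omega$, all global). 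Once this compatibility is pinned down, the telescoping on the closed $M$ is routine.
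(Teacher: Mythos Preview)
Your direct-computation approach has a genuine gap at exactly the point you flag as the ``main obstacle'', and your proposed resolution of it is incorrect. Expanding \eqref{h2} in characteristic two, with $A_v=Q_v(\nu,\eta)$ and $B_v=Q_v(\delta\beta,\eta)$, one finds
\[
\sum_{w\subset M}\bigl(\mathfrak c_w(\rho+\rho_0,\sigma)-\mathfrak c_w(\rho,\sigma)\bigr)
\;=\;\sum_{w}\sum_{v\subset w}A_vB_v\;+\;\sum_{w}\mathfrak c_w(\rho_0,\sigma).
\]
The first sum does telescope (each $4$-face appears twice and $A_vB_v$ is well-defined on~$v$). The second sum is precisely \eqref{sc} evaluated at $(\rho_0,\sigma)$, and this is what you have not shown to vanish. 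Your argument for it is that $\mu'_{ijkl}=(\delta\beta)_{ijkl}\,\eta_{ijkl}/\omega_{ijkl}$ is a globally defined $3$-cochain, so that $B=\delta\mu'$ globally and the usual ``coboundary on a closed manifold'' cancellation applies. But $\mu'$ depends on the actual cocycle~$\eta$, \emph{not} on $\eta$ modulo~$\omega$; and $\eta$ exists only simplex-by-simplex---the paper even remarks (after the calculation tables) that a generic permitted coloring does \emph{not} come from any global simplicial $3$-cocycle. Your parenthetical appeal to Proposition~\ref{p:Q} shows only that $B_v=(\delta\mu')(v)$ is well-defined on each $4$-simplex, not that $B$ is a global coboundary. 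Without a global primitive, the telescoping for $\sum_w\mathfrak c_w(\rho_0,\sigma)$ is not ``routine''; it is the whole content of the proposition.

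The paper's proof avoids this entirely by a different mechanism: reduce to $\rho_0$ coming from $\delta$ of the indicator of a single triangle~$s$, then invoke the Pachner-move invariance already established in Theorem~\ref{th:uc} to pass to a triangulation in which $s$ is absent (so $\rho_0=0$ there), and come back. No explicit manipulation of $Q$ or~$\mu'$ is needed; the global obstruction you ran into is absorbed by the topological move.
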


\begin{proof}
Enough to consider the case where we change~$\rho$ by the coboundary of the 2-cochain taking value~$1$ on one triangle~$s$ and $0$ on all other triangles. Then, we make a series of Pachner moves leading to the disappearance of~$s$. After that, we can introduce the new version of~$\rho$ while making the inverse moves in the inverse order.
\end{proof}

\subsection{Factoring modulo g-colorings}
Denote $V_p$ the linear space of permitted colorings of~$M$, and $V_g$ ---the space of g-colorings. In view of Proposition~\ref{p:pf}, it makes sense to consider $I(M,\omega)$ as a bipolynomial form on~$V_p/V_g$. Moreover, stabilization is no longer needed, as the following proposition shows.

\begin{proposition}\label{p:pg}
The dimension of linear space~$V_p/V_g$ is an invariant of the pair~$(M,\omega)$.
\end{proposition}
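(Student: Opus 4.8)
\textbf{Proof proposal for Proposition~\ref{p:pg}.}

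The plan is to compute $\dim(V_p/V_g)$ directly in terms of the topology of $M$ and the cohomology class of~$\omega$, thereby exhibiting it as an invariant of the pair. First I would identify the space~$V_p$ of permitted colorings of~$M$. By the very definition of our colorings (Section~\ref{ss:cm}), a permitted coloring of~$M$ assigns to each $4$-face a color in the $3$-dimensional space~$V_{\Delta^4}$ of $3$-cocycles on that $\Delta^4$ modulo~$\omega$, compatibly on each $5$-simplex; and by the argument used in the proof of Proposition~\ref{p:f} (surjectivity via inverse shelling), these are exactly the restrictions to the $4$-skeleton of global simplicial $3$-cocycles~$\nu$ on~$M$, taken modulo (global) multiples of~$\omega$. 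Hence $V_p \cong Z^3(M;F)/\langle\omega\rangle$, where $Z^3$ denotes simplicial $3$-cocycles with coefficients in~$F$. Next, $V_g$ is by definition the image in~$V_p$ of the coboundaries $B^3(M;F) = \delta C^2(M;F)$; note $\omega$ is a cocycle but, having nonzero class in general, need not lie in~$B^3$, so one must be a little careful about the quotient. One gets
\begin{equation*}
V_p/V_g \;\cong\; \frac{Z^3(M;F)/\langle\omega\rangle}{\bigl(B^3(M;F)+\langle\omega\rangle\bigr)/\langle\omega\rangle} \;\cong\; \frac{Z^3(M;F)}{B^3(M;F)+\langle\omega\rangle} \;=\; \frac{H^3(M;F)}{\langle[\omega]\rangle}.
\end{equation*}

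So the content reduces to the statement that $\dim_F H^3(M;F)/\langle[\omega]\rangle$ depends only on the pair $(M,[\omega])$ and not on the triangulation. The dimension of $H^3(M;F)$ is a PL (indeed homotopy) invariant of~$M$ — it equals the third Betti number over~$F$ — so it does not change under Pachner moves. The quotient by $\langle[\omega]\rangle$ drops the dimension by $1$ if $[\omega]\ne 0$ in $H^3(M;F)$ and by $0$ if $[\omega]=0$; in both cases this depends only on the cohomology class of~$\omega$, which is precisely the datum we are allowed to use. Since Theorem~\ref{th:uc} already established that $\omega$ is to be regarded up to its cohomology class, and since Pachner moves induce the canonical isomorphisms on simplicial cohomology carrying $[\omega]$ to $[\omega]$, the number $\dim H^3(M;F) - \dim\langle[\omega]\rangle$ is the asserted invariant. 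Finally, the remark in the surrounding text that ``stabilization is no longer needed'' is explained by this computation: after factoring by g-colorings the ambiguous inner coordinates that appeared in the proof of Theorem~\ref{th:uc} have been killed, so $V_p/V_g$ is genuinely finite-dimensional with a well-defined dimension.

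The main obstacle I anticipate is the bookkeeping around the one-dimensional subspace $\langle\omega\rangle$ when passing between $Z^3$, $B^3$ and their images in~$V_p$: one must verify that $B^3 \cap \langle\omega\rangle$ behaves correctly (it is all of $\langle\omega\rangle$ when $[\omega]=0$ and is $\{0\}$ otherwise) so that the iterated quotient collapses cleanly to $H^3(M;F)/\langle[\omega]\rangle$, and that adding a multiple of~$\omega$ to a representative cocycle — which is invisible in $V_p$ — is consistently treated as the zero coloring throughout. A secondary point, already essentially handled by the inverse-shelling argument in Proposition~\ref{p:f}, is the identification $V_p \cong Z^3(M;F)/\langle\omega\rangle$: one should check that the local freedom (each $5$-simplex carrying its own~$\nu$) really does glue to a global cocycle on a closed manifold, which it does because every $3$-face of~$M$ lies in some $5$-simplex and condition~\eqref{onv} forces the local representatives to agree modulo~$\omega$ on overlaps, exactly as in that proof. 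Once these identifications are in place, the invariance is immediate from the homotopy invariance of $H^3$ and the naturality of the construction in~$[\omega]$.
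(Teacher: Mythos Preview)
Your argument rests on the identification $V_p \cong Z^3(M;F)/\langle\omega\rangle$, and this identification is false. The inverse-shelling argument of Proposition~\ref{p:f} works because the cluster in a Pachner move is a shellable PL ball: at each step the new $\Delta^5$ meets the already-built part along a contractible union of $4$-faces, and condition~\eqref{onv} forces the local representatives to agree. A closed $5$-manifold is not assembled this way; when you go around a cycle in the dual graph, the local cocycles $\nu$ on adjacent $5$-simplices agree only modulo~$\omega$ on each shared $4$-face, and the accumulated multiples of~$\omega$ need not cancel. So there is a genuine obstruction to gluing a permitted coloring to a global simplicial $3$-cocycle, and $V_p$ is in general strictly larger than $Z^3(M;F)/\langle\omega\rangle$.

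The paper's own calculations confirm this. Your formula would give $\dim(V_p/V_g)=\dim H^3(M;F)-\dim\langle[\omega]\rangle$, hence at most $\dim H^3(M;F)$. But for $M=\mathbb RP^2\times S^3$ with $[\omega]=0$ one has $\dim H^3(M;\mathbb F_2)=1$ while the table gives $\dim(V_p/V_g)=2$; for $M=\mathbb RP^4\times S^1$ with $[\omega]=0$ one has $\dim H^3=2$ while $\dim(V_p/V_g)=4$; and so on throughout Section~\ref{s:c}. The remark just before the tables says this explicitly: ``typically, for $[\omega]=0$, not all permitted colorings correspond to any (global) simplicial $3$-cocycle~$\nu$''. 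The paper's actual proof avoids computing $V_p/V_g$ altogether: it observes that a Pachner move replaces one PL ball by another with the same boundary, and that \emph{within such a ball} the spaces of permitted colorings and of g-colorings coincide (both being $3$-cocycles modulo~$\omega$, by Proposition~\ref{p:f}); hence $\dim V_p$ and $\dim V_g$ jump by the same amount under any move, and their difference is invariant.
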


\begin{proof}
Any Pachner move makes, as we remember, only a local change of triangulation---replacement of cluster~$C_{\mathrm{ini}}$ by~$C_{\mathrm{fin}}$, with the same boundary. For any of these clusters, the linear spaces of restrictions of g-colorings and general permitted colorings are the same and correspond, locally, to 3-cocycles modulo~$\omega$. Hence, the changes of $\dim V_p$ and~$\dim V_g$ are the same under any Pachner move.
\end{proof}

\section{Calculations}\label{s:c}

\subsection{Calculations in finite fields}

It turns out that, for actual computer calculations, a large enough \emph{finite} field can be taken instead of our $F=\mathbb F_2(\upxi)$. At least, it applies to the examples given below in this section, where 3-cocycles~$\omega$ belong to cohomology classes with coefficients in the \emph{simple field}~$\mathbb F_2$.

That is, we calculate cocycles representing a 3-cohomology basis of~$M$ with coefficients in~$\mathbb F_2$, but then a coboundary with coefficients in a larger field must be added to it to ensure condition~\eqref{onv}.

We provide some results of computer calculations for specific manifolds, and for the 5-cocycle $\mathfrak c = c$~\eqref{h2} in characteristic two, using finite fields like~$F_{\mathrm{fin}} = \mathbb F_{2^{15}}$. 
Remarkably, in all examples below, our bipolynomial invariant turns out to be a symmetric \emph{bilinear} form of the \emph{squares} of its variables (the same phenomenon was observed for simpler invariants in~\cite{odd-gons})!

Hence, matrix~$A$ of this bilinear form appears whose \emph{rank} is a manifold invariant, because it is determined by~$I(M,\omega)$, and does not change under all transformations mentioned in Definition~\ref{d:I}.

\begin{proposition}
 The calculation results, namely, $\dim ( V_p / V_g )$ and $\rank A$, do not change after replacing a finite field $F_{\mathrm{fin}}$ with $F=\mathbb F_2(\upxi)$.
\end{proposition}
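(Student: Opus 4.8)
The plan is to show that both quantities in question are determined by linear-algebraic data (ranks of certain matrices) over a finitely generated subring, and that passing to a large finite field does not change these ranks. First I would observe that the whole computation takes place over the polynomial ring $R = \mathbb F_2[\upbeta_{\rho},\upbeta_{\sigma}, \text{(finitely many)}\, \omega_{ijkl}, \text{(finitely many)}\, \omega_{ijkl}^{-1}]$: only finitely many indeterminates $\xi_\ell$ actually occur in the chosen representative of $\omega$, and the cocycle $c$ of~\eqref{h2} involves only the quantities $Q_v(\nu,\eta)$, which are rational functions whose denominators are products of the finitely many $\omega_{ijkl}$. So the sum~\eqref{sc}, viewed as a bipolynomial form in the coordinates of $\rho$ and $\sigma$, has coefficients in a localization $\mathbb F_2[\underline\xi][\omega^{-1}]$ of a polynomial ring in finitely many variables.

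Next I would treat the two assertions separately. For $\dim(V_p/V_g)$: the space $V_p$ of permitted colorings is the solution space of the linear system saying that the color data is a 3-cocycle modulo $\omega$ on each $\Delta^5$; the matrix of this system has entries in $\mathbb F_2$ together with the $\omega_{ijkl}$, and $\dim V_p$ is the corank of that matrix. Similarly $V_g$ is cut out by the further condition of being a global coboundary, again a linear system with entries in the same ring. So $\dim(V_p/V_g)$ equals a difference of coranks, i.e.\ it is governed by the vanishing/non-vanishing of various minors, which are polynomials in the $\xi_\ell$ and the $\omega_{ijkl}$ with $\mathbb F_2$-coefficients; these are finitely many nonzero polynomials. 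For $\rank A$: the matrix $A$ of the bilinear form in the squared variables (whose existence is asserted just above the proposition) has entries in the same localized polynomial ring, and $\rank A$ is again determined by which of its minors vanish — finitely many nonzero polynomials in the $\xi_\ell$ and $\omega_{ijkl}$.

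The key point is then a standard genericity/specialization argument: a finite collection of nonzero polynomials over $\mathbb F_2$ in finitely many variables has a common non-vanishing point already in $\mathbb F_{2^N}$ for $N$ large enough (indeed in any sufficiently large finite field, by the Schwartz--Zippel-type bound: the number of zeros of each is at most its degree times $|F_{\mathrm{fin}}|^{(\text{\#vars}-1)}$, so their union misses a point once $|F_{\mathrm{fin}}|$ exceeds the sum of the degrees). At such a point, every relevant minor has the same vanishing status as over $F = \mathbb F_2(\upxi)$, so $\dim(V_p/V_g)$ and $\rank A$ computed over $F_{\mathrm{fin}}$ agree with those over $F$. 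One must also check that condition~\eqref{onv} — all $\omega_{ijkl}\neq 0$ — is itself one of the polynomial conditions being kept non-vanishing, so the specialized $\omega$ is still admissible; this is automatic since we include the $\omega_{ijkl}$ themselves among the polynomials that must not vanish.

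The main obstacle I anticipate is bookkeeping rather than conceptual: making sure that the field element representing $\omega$ over $F_{\mathrm{fin}}$ really is a specialization of the same cocycle used over $F$ (so that it lies in the same $\mathbb F_2$-cohomology class — true because the $\mathbb F_2$-cohomology basis part is fixed and only the coboundary correction involves the $\xi_\ell$), and verifying that the quantity $A$ is extracted from~\eqref{sc} in a way that commutes with specialization, i.e.\ that "bilinear form of the squares of the variables" is a property stable under the ring map $R \to F_{\mathrm{fin}}$ and that its matrix specializes accordingly. Granting that, the rank statement is just the semicontinuity of matrix rank plus the existence of a point where the rank is maximal, which the genericity argument supplies.
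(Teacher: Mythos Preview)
Your argument is correct but follows a genuinely different route from the paper's. You treat both quantities as ranks of matrices with entries in a localized polynomial ring over~$\mathbb F_2$ and then use a Schwartz--Zippel type count to find, in any sufficiently large finite field, a specialization at which every relevant nonzero minor stays nonzero; at such a point the ranks agree with their generic values over~$F=\mathbb F_2(\upxi)$. The paper instead argues by base change and invariance: it extends scalars from~$F_{\mathrm{fin}}$ to~$F_{\mathrm{fin}}(\upxi)$, then invokes Theorem~\ref{th:uc} and Proposition~\ref{p:pg} (Pachner moves and adding coboundaries to~$\omega$) to move~$\omega$ into the subfield~$\mathbb F_2(\upxi)\subset F_{\mathrm{fin}}(\upxi)$, after which the spaces~$V_p,V_g$ and the form split as~$\tilde V\otimes_{\mathbb F_2}F_{\mathrm{fin}}$ and the factor~$F_{\mathrm{fin}}$ can be dropped without changing dimensions or rank. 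The trade-off is this: the paper's approach, because it rides on the already-established invariance, shows that \emph{every} admissible~$\omega$ over~$F_{\mathrm{fin}}$ in the given $\mathbb F_2$-cohomology class yields the same numbers as over~$F$; your approach, as stated, yields only that a \emph{generic} specialization does---precisely what is needed to justify a computer run with random coboundary parameters, but to upgrade to ``any admissible~$\omega$'' you would still have to appeal to the invariance theorem. On the other hand, your argument is more elementary and does not require re-running Theorem~\ref{th:uc} over the auxiliary field~$F_{\mathrm{fin}}(\upxi)$.
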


\begin{proof}
The idea is in replacing elements of~$F_{\mathrm{fin}}$ by indeterminates over~$\mathbb F_2$. The cohomology class is not affected because it belongs to~$\mathbb F_2$.

Let  vector space $V$ over $F_{\mathrm{fin}}$ be either $V_p$ or $V_g$. First, we pass from $V$ to
 \begin{equation*}
  W = F_{\mathrm{fin}} \otimes _{\mathbb F_2} \mathbb F_2 ( \upxi )
 \end{equation*}
 and consider $W$ as a vector space over $F_{\mathrm{fin}} (\upxi) = \mathbb F_2(\upxi) \otimes _{\mathbb F_2} F_{\mathrm{fin}}$.
 
 Doing necessary Pachner moves and changing components of~$\omega$, we can make them belong to $\mathbb F_2(\upxi) \subset F_{\mathrm{fin}}(\upxi)$ (as $\omega$ belongs to a cohomology class with coefficients in~$\mathbb F_2$, no coefficients from~$F_{\mathrm{fin}}$ other than $0$ and~$1$ are needed). This way, $W$ acquires the form
 \begin{equation*}
  \tilde W = \tilde V \otimes _{\mathbb F_2} F_{\mathrm{fin}},
 \end{equation*}
 where $\tilde V$ is a vector space over $\mathbb F_2 (\upxi)$.
 
 Now the tensor multiplication by $F_{\mathrm{fin}}$ can be removed everywhere, with passing from field $F_{\mathrm{fin}}(\upxi)$ to $F = \mathbb F_2 (\upxi)$. We see that the results for field $F_{\mathrm{fin}}$ are the same as for~$F$.
\end{proof}

\subsection{Notations and remarks}

Below, $[\omega ]$ means the cohomology class of the simplicial 3-cocycle~$\omega$ over~$\mathbb F_2$; we also take the liberty to call~$[\omega ]$ simply ``cocycle''. The whole number of cocycles (in this sense) is~$2^{\dim H^3(M,\mathbb Z_2)}$.

 It follows from the examples below that, typically, for $[\omega]=0$, \emph{not all permitted colorings correspond to any (global) simplicial 3-cocycle~$\nu$}, due to the fact that $\dim V_p/V_g > \dim H^3(M,\mathbb Z_2)$. Moreover, similar phenomenon can be seen to take place even for some nonzero~$[\omega]$'s!

Letter~$K$ means below the Klein bottle.

\subsection{$M = \mathbb RP^2 \times S^3$}\label{ss:rp2s3}

The third cohomology group is one-dimen\-sional: $\dim H^3(M,\mathbb Z_2)=1$, denote its generator~$c$. There are hence just two cocycles to consider, as follows.

$[\omega ] = 0$: \quad $\dim V_p/V_g = 2$, \ $\rank A = 2$.

$[\omega ] = c$: \quad $\dim V_p/V_g = 0$, \ $\rank A = 0$.

\subsection{$M = \mathbb RP^4 \times S^1$}\label{ss:rp4s1}

Denote $a$ a generator of the first cohomology group of~$\mathbb RP^4$, and $b$ a generator of the first cohomology group of~$S^1$.

$[\omega ] = 0$: \quad $\dim V_p/V_g = 4$, \ $\rank A = 4$.

$[\omega ] = a^2 \smile b$: \quad $\dim V_p/V_g = 2$, \ $\rank A = 2$.

$[\omega ] = a^3$: \quad $\dim V_p/V_g = 1$, \ $\rank A = 0$.

$[\omega ] = a^3 + a^2 \smile b$: \quad $\dim V_p/V_g = 1$, \ $\rank A = 0$.

\subsection{$M = K \times \mathbb RP^3$}\label{ss:kbrp3}

$[\omega ] = 0$: \quad $\dim V_p/V_g = 7$, \ $\rank A = 4$.

Other cocycles:

\begin{center}
\begin{tabular}{|c|c|c|}
\hline
$\dim V_p/V_g$ & $\rank A$ & number of such cocycles \\ \hline \hline
3 & 0 & 4 \\ \hline
4 & 0 & 2 \\ \hline
4 & 2 & 8 \\ \hline
5 & 2 & 1 \\ \hline
\end{tabular}
\end{center}

\subsection{$M = \mathbb RP^2 \times \mathbb RP^3$}\label{ss:rp2rp3}

$[\omega ] = 0$: \quad $\dim V_p/V_g = 5$, \ $\rank A = 4$.

Other cocycles:

\begin{center}
\begin{tabular}{|c|c|c|}
\hline
$\dim V_p/V_g$ & $\rank A$ & number of such cocycles \\ \hline \hline
2 & 0 & 4 \\ \hline
3 & 2 & 3 \\ \hline
\end{tabular}
\end{center}

\subsection{$M = S^1 \times K \times \mathbb RP^2$}\label{ss:s1kbrp2}


$[\omega ] = 0$: \quad $\dim V_p/V_g = 11$, \ $\rank A = 8$.

Other cocycles:

\begin{center}
\begin{tabular}{|c|c|c|}
\hline
$\dim V_p/V_g$ & $\rank A$ & number of such cocycles \\ \hline \hline
6 & 0 & 56 \\ \hline
7 & 2 & 36 \\ \hline
8 & 4 & 34 \\ \hline
9 & 6 & 1 \\ \hline
\end{tabular}
\end{center}

\subsection{$M = S^1 \times \mathbb RP^2 \times \mathbb RP^2$}\label{ss:s1rp2rp2}


$[\omega ] = 0$: \quad $\dim V_p/V_g = 8$, \ $\rank A = 6$.

Other cocycles:

\begin{center}
\begin{tabular}{|c|c|c|}
\hline
$\dim V_p/V_g$ & $\rank A$ & number of such cocycles \\ \hline \hline
4 & 0 & 12 \\ \hline
5 & 2 & 16 \\ \hline
6 & 4 & 3 \\ \hline
\end{tabular}
\end{center}

\subsection{$M = S^2 \times \mathbb RP^3$}\label{ss:s2rp3}


$[\omega ] = 0$: \quad $\dim V_p/V_g = 3$, \ $\rank A = 0$.

Other cocycles:

\begin{center}
\begin{tabular}{|c|c|c|}
\hline
$\dim V_p/V_g$ & $\rank A$ & number of such cocycles \\ \hline \hline
1 & 0 & 2 \\ \hline
2 & 0 & 1 \\ \hline
\end{tabular}
\end{center}

\subsection{$M = S^2 \times S^3$}\label{ss:s2s3}


$[\omega ] = 0$: \quad $\dim V_p/V_g = 1$, \ $\rank A = 0$.

There is of course the only nonzero 3-cocycle, and for it: \ $\dim V_p/V_g = 0$, \ $\rank A = 0$.

\subsection{$M = K \times S^3$}\label{ss:kbs3}


$[\omega ] = 0$: \quad $\dim V_p/V_g = 3$, \ $\rank A = 2$.

There is the only nonzero 3-cocycle, and for it: \ $\dim V_p/V_g = 0$, \ $\rank A = 0$.

\end{document}